\newcommand{\bN}{\mathbb{N}}
\newcommand{\bZ}{\mathbb{Z}}
\newcommand{\bC}{\mathbb{C}}
\newcommand{\ini}{\mathrm{in\,}}
\newcommand{\lt}{\mathrm{lt}}
\newtheorem{Theorem}{Theorem}
\newtheorem{Lemma}{Lemma}
\newtheorem{ex}{Example}
\newtheorem{Corollary}{Corollary}
\newtheorem{Claim}{Claim}
\newtheorem{Remark}{Remark} 
\title{Non-degeneracy conditions for plane branches}
\author{Beata Gryszka and Janusz Gwo\'zdziewicz}
\begin{document}
\maketitle

\begin{abstract}
Let $x=t^n$, $y=\sum_{i=1}^{\infty}a_it^i$ be a 
parametrisation of the germ of a complex plane analytic curve $\Gamma$ at the origin. 
Then $\Gamma$ has the implicit equation $f(x,y)=0$ in the neighbourhood of the origin,  
where $f=\sum c_{ij}x^iy^j$ is a Weierstrass polynomial in $\mathbb{C}[[x]][y]$ of degree $n$. Every polynomial depending on coefficients $c_{ij}$ can be expressed 
as a polynomial depending on the coefficients $a_i$. 
We show a partial converse of this property.
\end{abstract}

\section{Introduction}

Let $\alpha=\sum_{i=1}^{\infty}a_i T^i\in\bC[[T]]$ be a formal power series. 
By the {\it support} of $\alpha$ we mean 
the set ${\rm supp}(\alpha)=\{ i\in\bN : a_i\neq 0\}$.
We will call a pair $(T^n,\alpha(T))$ a {\it Puiseux paramertisation} if $\gcd(\{n\} \cup {\rm supp}(\alpha))=1$. 
The {\it characteristic} of a Puiseux parametrisation $(T^n,\alpha(T))$ is the unique sequence $(b_0,b_1,\dots,b_m)$ of positive integers satisfying: 
\begin{itemize}
    \item[(i)] $b_0=n$,
    \item[(ii)] $b_{k+1}=\min ( {\rm supp}(\alpha) \setminus \gcd(b_0,\dots,b_k)\bN)$
    \item[(iii)]  $\gcd(b_0,\dots,b_m)=1$.
\end{itemize}
We also set by convention $b_{m+1}:=+\infty$. We put $e_k:=\gcd(b_0,\dots,b_k)$ for $k=0,\dots,m$ 
and $n_k:=\frac{e_{k-1}}{e_k}$ for $k=1,\dots,m$.

Observe that $e_0>e_1>\cdots>e_m=1$. For every positive integer $k$ we define  $$U_k:=\{z\in\mathbb{C}:z^k=1\}.$$ Then 
$$U_{e_m}\subset U_{e_{m-1}}\subset \cdots \subset U_{e_1}\subset U_{e_0}.$$

\medskip
Let $f\in\bC[[X,Y]]$  
be an irreducible formal power series which is a Weierstrass polynomial of degree $n$, i.e.
$$ f(X,Y)= Y^n + c_1(X)Y^{n-1}+\cdots+c_n(X) $$
where $c_i(X)\in\bC[[X]]$ are power series without constant terms.
Then by the~Puiseux theorem there exists a Puiseux parametrisation  $(T^n,\alpha(T))$
such that $f(T^n,\alpha(T))=0$.  We call $(T^n,\alpha(T))$
a~{\it Puiseux parametrisation of a curve $f(x,y)=0$}.   

Conversely, let $(T^n,\alpha(T))$ be a Puiseux parametrisation.  Then 
$\{(T^n,\alpha(\varepsilon T)): \varepsilon\in U_n\}$
is the set of Puiseux parametrisations of the curve $f(x,y)=0$,
where the irreducible Weierstrass polynomial $f\in\bC[[X,Y]]$ of
degree $n$ is given by 
\begin{equation}\label{equation2} 
f(T^n,Y)=\prod_{\varepsilon\in U_n}(Y-\alpha(\varepsilon T)). \end{equation}

Equation (\ref{equation2})  establishes an $n$-to-one 
correspondence between 
Puiseux para\-metrisations $(T^n,\sum_{i=1}^{\infty}a_i T^i)$
and irreducible Weierstrass polynomials 
of degree~$n$. 
The characteristic of an irreducible Weierstrass polynomial $f$ 
is by definition 
the characteristic of any Puiseux parametrisation of $f(x,y)=0$.
The set of all irreducible Weierstass polynomials in $\bC[[X,Y]]$ that have the characteristic $(b_0,b_1,\dots,b_m)$ will be denoted by $\mathcal{F}(b_0,b_1,\dots,b_m)$.

Assume that $f=\sum_{ij} c_{ij} X^iY^j\in\mathbb{C}[[X,Y]]$
is an irreducible power series such that there exists $n>0$ with the property $c_{0j}=0$ for $j<n$ and $c_{0n}\ne 0$. By the Weierstrass Preparation Theorem (see e.g. \cite{hefez}) there exist unique  $u_f,w_f\in\mathbb{C}[[X,Y]]$ such that $f=u_fw_f$, $u_f$ is invertible and $w_f$ is a Weierstrass polynomial of degree $n$.  We  denote by $\widehat{\mathcal{F}}(n,b_1,\dots,b_m)$ 
the set of all irreducible power series $f\in\bC[[X,Y]]$ such that 
the Weierstrass polynomial $w_f$ of $f$ exists and belongs to $\mathcal{F}(n,b_1,\dots,b_m)$. 
Since $f$ and $w_f$ define the same curve,  we can say about  a~Puiseux parametrisation of a curve  $f(x,y)=0$.

Recall the classical Zariski result (see e.g. \cite[p. 995]{z3}, \cite[Theorem 3.3]{Z}, \cite{hh}).  
Suppose that irreducible power series $f_1,f_2\in\bC[[X,Y]]$ are convergent in the neighbourhood of the origin. If $f_1$, $f_2$ have the same characteristic, then the germs of analytic curves 
$\gamma_i=\{(x,y)\in\bC^2: f_i(x,y)=0\}$ at the origin are equisingular, i.e. there exists a homeomorphism $\Phi:U\to V$ of neighbourhoods $U$, $V$
of $(0,0)$ in $\bC^2$ such that $\Phi(\gamma_1\cap U)=\gamma_2\cap V$. 
Conversely, if the germs of analytic curves $xf_1(x,y)=0$, $xf_2(x,y)=0$ are equisingular then $f_1$ and $f_2$ have the same characteristic.  

In view of  Zariski result the set $\widehat{\mathcal{F}}(b_0,b_1,\dots,b_m)$ can be seen as a given equisingularity class of irreducible power series. 
The main objective of this article is to relate polynomials depending on coefficients 
of a Weierstrass polynomial in $\mathcal{F}(n,b_1,\dots,b_m)$ with polynomials 
depending on coefficients of its Puiseux parametrisation.

Let us start with the following example.



\begin{ex}\label{ex:1}
Consider a Puiseux parametrisation $(T^3,aT^4+bT^7+cT^{10})$, 
$a\neq0$ of the curve $f(x,y)=0$.
Then the Weierstrass polynomial $f\in\mathcal{F}(3,4)$ equals
$$
Y^3-(c_4X^{4}+c_5X^{5}+c_6X^{6}+c_7X^{7}+c_8X^{8}+c_9X^{9}+c_{10}X^{10}),$$
where
$c_{4}=a^3$,
$c_{5}=3a^2b$,
$c_{6}=3ab^2+3a^2c$,
$c_{7}=b^3+6abc$,
$c_{8}=3b^2c+3ac^2$,
$c_{9}=3bc^2$,
and $c_{10}=c^3$.

Every polynomial depending 
on the coefficients of $f$ can be expressed as a polynomial from the ring $\bC[a,b,c]$. Since $c_4,\dots,c_{10}$ are homogeneous polynomials of degree three, every such polynomial $Q$ satisfies the condition: \\
$Q(\varepsilon^4 a,\varepsilon^7 b,\varepsilon^{10}c)=Q(a,b,c)$ for $\varepsilon\in U_3$. This condition will be called $U_3$--invariance of $Q$.  The~natural question is whether  
every polynomial from $\bC[a,b,c]$ that satisfies the above condition 
can be represented as a polynomial in $c_4,\dots,c_{10}$. 

Consider $Q=ab^2$. 
One can check using a symbolic algebra system (we used SINGULAR) that $Q$ does not belong to
the ideal generated by $c_4,\dots,c_{10}$ in the ring $\mathbb{C}[a,b,c]$ and thus $Q\notin \mathbb{C}[c_4,\dots,c_{10}]$.
However
$$Q=\tfrac{(3a^2b)^2}{9a^3}=\tfrac{c_5^2}{9c_4}$$ 
is a rational function of $c_4$ and $c_5$. 
\end{ex}

Our main results: 
Theorem~\ref{main},
Corollary~\ref{c:series}, 
and Theorem~\ref{t3}
explain and generalize the effect observed in Example~\ref{ex:1}.

\medskip
The organization of the paper is as follows: 

In Section~\ref{Section:Hensel} we consider a factorization $f=gh$ of 
a Weierstrass polynomial~$f$ to a product of Weierstrass polynomials $g$, $h$. We prove that if for some weight vector $\omega$
the initial weighted polynomials $\ini_{\omega}h$ and $\ini_{\omega}g$
are coprime Weierstrass polynomials, then the coefficients of $g$ and
$h$ depend polynomially on the reciprocal of the resultant 
$Res(\ini_{\omega}g,\ini_{\omega}h)$ 
and coefficients of: $\ini_{\omega}h$, $\ini_{\omega}g$, and~$f$. 
This result is used in the proof of Lemma~\ref{lemma1}.  

In Section~\ref{sect:coeff} we characterise the Puiseux parametrisation of a plane branch curve from a given equisingularity class.
Then we introduce a class of polynomials depending on the coefficients of a Puiseux 
parametrisation that have the same value for 
equivalent parametrisations. We call this class
$U_n$--invariant polynomials. 

In the next section we prove that every $U_n$--invariant polynomial dependent on the coefficients of a Puiseux parametrisation of a plane branch can be expressed as a rational function of the coefficients of the Weierstrass polynomial, which defines this branch.

In Section 5 we reformulate the above result for a power series that defines a given plane branch. 

In the last section we show that some non-degeneracy condition formulated by a polynomial of coefficients of a Puiseux parametrisation of a~plane branch can be expressed as a polynomial condition dependent on the coefficients of a power series associated with this curve. 

\section{The effective Hensel's lemma}\label{Section:Hensel}
Consider a non-zero vector $\omega=(a,b)\in\mathbb{Z}_{\geq  0}^2.$  For a monomial  $x^iy^j$ we define the {\it weight} of this monomial as  the positive integer $\omega(x^iy^j):=ai+bj$. We also say that  $\omega$ is a {\it weight} in the ring $\mathbb{C}[x,y]$. Then $$\mathbb{C}[x,y]=\bigoplus_{t\in\mathbb{Z}_{\geq 0}}R_t$$ is a graded ring, where $R_t$ is spanned by all monomials of weight $t$.

Below we verify that a division with reminder of quasi-homogeneous polynomials leads to 
quasi-homogeneous polynomials. 

\begin{Lemma}\label{division}
Let $E$ be an integer domain and let $f\in R_s$, $g\in R_t$ 
be quasi-homogeneous polynomials in $E[x,y]$. 
Assume that $g$ is monic with respect to $y$.
Then there exist $q, r\in E[x,y]$ such that $qg, r \in R_s$ and 
$$ f=qg+r, \quad \deg_y r<\deg_y g. $$
\end{Lemma}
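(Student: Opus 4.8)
The plan is to perform ordinary polynomial division of $f$ by $g$ in the variable $y$, where we treat coefficients as elements of $E[x]$, and then to track the weights. Since $g \in R_t$ is monic in $y$ of some $y$-degree, say $\deg_y g = d$, its leading term in $y$ is exactly $y^d$ (the coefficient of $y^d$ has weight contribution $bd$ and equals $1$, forcing $t = bd$ only if... actually more carefully: $g$ is quasi-homogeneous of weight $t$ and monic in $y$, so the monomial $y^d$ appears with coefficient $1$, hence $t = \omega(y^d) = bd$, and every other monomial $x^i y^j$ of $g$ satisfies $ai + bj = bd$, so $j < d$). The standard division algorithm produces $q, r \in E[x][y] \subseteq E[x,y]$ with $f = qg + r$ and $\deg_y r < d$; this works over any commutative ring because $g$ is monic in $y$, and $E$ being a domain is not even needed for existence (though it is harmless to assume).

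The key step is to verify the weight homogeneity by induction on $\deg_y f$. If $\deg_y f < d$, then $q = 0$, $r = f \in R_s$, and we are done. Otherwise, write $f = \lambda x^i y^j + (\text{lower } y\text{-degree terms of } f)$ where $\lambda x^i y^j$ collects the leading-$y$-degree part, so $j = \deg_y f \geq d$ and $i$ is determined by $ai + bj = s$. The first reduction step subtracts $\lambda x^i y^{j-d} \cdot g$ from $f$. Here $\lambda x^i y^{j-d}$ is a monomial of weight $ai + b(j-d) = s - bd = s - t$, so $\lambda x^i y^{j-d} \cdot g$ is quasi-homogeneous of weight $(s-t) + t = s$, i.e.\ lies in $R_s$. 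Hence $f_1 := f - \lambda x^i y^{j-d} g \in R_s$ as well, and $\deg_y f_1 < \deg_y f$. By the inductive hypothesis applied to $f_1$, there are $q_1, r$ with $q_1 g, r \in R_s$, $f_1 = q_1 g + r$, $\deg_y r < d$. Then $f = (q_1 + \lambda x^i y^{j-d}) g + r$; setting $q := q_1 + \lambda x^i y^{j-d}$ gives $qg = q_1 g + \lambda x^i y^{j-d} g \in R_s$ (sum of two elements of $R_s$), and $r \in R_s$, completing the induction.

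I expect the only mild subtlety — not really an obstacle — to be the bookkeeping around the case where the weight $a$ of $x$ could be zero, so that $x^i y^j$ of a given weight need not have $i$ uniquely determined by $j$; but in that situation $g = y^d$ (all terms of $g$ of weight $bd$ with $y$-degree $\le d$ and monic forces $g = y^d$ when $a = 0$ unless there are other terms $x^i$ with $bd = $ nonsense), so one checks this degenerate case separately or simply observes that the argument above only uses that $\lambda x^i y^{j-d} g$ lands in $R_s$, which holds because multiplication maps $R_p \cdot R_q \to R_{p+q}$ regardless. The crux is really just the observation $s - \omega(g) = s - t$ together with graded multiplicativity, so the proof is short once the grading is set up. One should also remark that $q$ and $r$ are then uniquely determined (standard, since $g$ is monic in $y$), though uniqueness is not asserted in the statement.
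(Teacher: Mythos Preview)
Your proposal is correct and follows essentially the same approach as the paper: both carry out the standard division-with-remainder algorithm in the variable $y$ (valid over any ring since $g$ is monic) and observe that each reduction step $f \mapsto f - (\lt_y f / y^d)\, g$ stays inside $R_s$ because $\lt_y f / y^d \in R_{s-t}$ and multiplication respects the grading. The paper phrases this as an iteration while you phrase it as an induction on $\deg_y f$, but the argument is the same.
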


\begin{proof}
If $\deg_y f<\deg_yg$ then there is nothing to prove since in this case $r=f$ and $q=0$. 
In what follows assume $m:=\deg_yg\leq\deg_y f$ and apply the standard polynomial division algorithm 
to $f_0:=f$ and $g$; 
$$ f_i := f_{i-1}- \tfrac{\lt(f_{i-1})}{y^m} g, \quad\mbox{for } i=1,2,\dots, j $$
performed while $\deg_y f_{i-1} \geq \deg_y g$, where $\lt(f_{i-1})$ denotes the leading term of a polynomial $f_{i-1}$ with respect to $y$.  
At every step of this algorithm $f_i\in R_s$.  Hence $r:=f_j$ and 
$q:=\lt(f_0)/{y^m}+\cdots + \lt(f_{j-1})/y^m$ 
satisfy the statement of the lemma. 
\end{proof}

Let $g,h\in E[y]$, 
$g(y)=g_0+g_1y+\cdots+g_my^m$, 
$h(y)=h_0+h_1y+\cdots+h_ny^n$ be two 
polynomials with coefficient in an integral domain $E$. 
Recall that the determinant of their Sylvester matrix 
$$ S=
\left( \begin{array}{llllll}
g_m & g_{m-1} & \dots       & g_0 & &   \\
       & g_m        & g_{m-1} & \dots & g_0 &   \\
&  \;\;\;\;\ddots &&&& \\
 & & g_m        & g_{m-1} & \dots & g_0   \\
h_n & h_{n-1} & \dots & h_0 & &  \\
       & h_n        & h_{n-1} & \dots & h_0 &  \\
&  \;\;\;\;\ddots &&&& \\
  & & h_n        & h_{n-1} & \dots & h_0 \\
\end{array} \right)
$$
is called the resultant ${\rm Res}(g,h)$ of $g$ and $h$.  The Sylvester matrix has the property 
that for any polynomials 
$a(y)=a_0+a_1y+\cdots+a_{n-1}y^{n-1}$, 
$b(y)=b_0+b_1y+\cdots+b_{m-1}y^{m-1}$, and 
$f(y)=f_0+f_1y+\cdots+f_{m+n-1}y^{m+n-1}$ 
with coefficients in $E$ 
$$f=ga+hb \Leftrightarrow (f_{m+n-1},\dots, f_0)=(a_{n-1},\dots,a_0,b_{m-1},\dots,b_0)\cdot S. $$
Multiplying the above equation by the adjugate $adj(S)$ of $S$, and using Cramer's rule 
$S\cdot adj(S) = \det(S) I$ we get
$$ (f_{m+n-1},\dots, f_0)\cdot adj(S)=\det(S) (a_{n-1},\dots,a_0,b_{m-1},\dots,b_0).  
$$
In consequence we have the following property

\begin{Lemma}\label{resultant0}
Let $g(y)=g_0+\cdots+g_my^m$, 
$h(y)=h_0+\cdots+h_ny^n$, 
$f(y)=f_0+\cdots+f_{m+n-1}y^{m+m-1}$ be polynomials in variable $y$ with 
coefficients treated as indeterminants. Then there exist unique polynomials 
$A_0,\dots,A_{n-1},B_0\dots,B_{m-1}$ in the ring $\bZ[g_0,\dots,g_m,h_0,\dots,h_n,f_0\dots,f_{m+n-1}]$ such that for 
$a(y)=\sum_{i=0}^{n-1}A_iy^i$, $b(y)=\sum_{i=0}^{m-1}B_i y^i$ we have 
$$ ga+hb=Res(g,h) f. $$
\end{Lemma}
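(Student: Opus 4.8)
The plan is to unwind the matrix manipulation sketched just before the statement and turn it into an existence-and-uniqueness claim over the polynomial ring $R := \bZ[g_0,\dots,g_m,h_0,\dots,h_n,f_0,\dots,f_{m+n-1}]$. First I would write down the Sylvester matrix $S$ explicitly as an $(m+n)\times(m+n)$ matrix with entries in $R$, and recall the equivalence displayed in the excerpt: for column vectors with coordinates $(a_{n-1},\dots,a_0,b_{m-1},\dots,b_0)$ the product with $S$ reads off the coefficient sequence $(f_{m+n-1},\dots,f_0)$ of $ga+hb$. The key algebraic fact is Cramer's identity $S\cdot\mathrm{adj}(S)=\det(S)\,I$, valid over any commutative ring, in particular over $R$. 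Multiplying the row vector $(f_{m+n-1},\dots,f_0)$ on the right by $\mathrm{adj}(S)$ therefore produces $\det(S)$ times the vector $(a_{n-1},\dots,a_0,b_{m-1},\dots,b_0)$, and since $\mathrm{adj}(S)$ has entries that are polynomials in the $g_i$ and $h_j$ (being signed minors of $S$), each coordinate of $(f_{m+n-1},\dots,f_0)\cdot\mathrm{adj}(S)$ is an element of $R$. I would then \emph{define} $A_{n-1},\dots,A_0,B_{m-1},\dots,B_0$ to be precisely these coordinates, so that by construction $A_i,B_j\in R$ and, setting $a(y)=\sum A_iy^i$, $b(y)=\sum B_iy^i$, the coefficient vector of $ga+hb$ equals $\det(S)$ times the coefficient vector of $f$; since $\det(S)=\mathrm{Res}(g,h)$, this is exactly the identity $ga+hb=\mathrm{Res}(g,h)\,f$.

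For uniqueness I would argue by passing to the fraction field $K=\mathrm{Frac}(R)$, where $\mathrm{Res}(g,h)\ne 0$ (it is a nonzero polynomial in the indeterminates, hence a nonzero element of the domain $R$). Over $K$ the equation $ga+hb=\mathrm{Res}(g,h)f$ determines $a,b$ uniquely among polynomials with $\deg a\le n-1$ and $\deg b\le m-1$: this is the standard Bézout-degree uniqueness, which follows because the linear map sending $(a,b)$ to the coefficient vector of $ga+hb$ is represented by $S$, and $S$ is invertible over $K$ precisely because $\det S=\mathrm{Res}(g,h)\ne 0$. Hence any two tuples $(A_i,B_j)$ in $R\subset K$ satisfying the identity must coincide, giving uniqueness. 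Alternatively one can keep everything inside $R$: if $(A_i',B_j')$ is another solution then $S$ applied to the difference of the coefficient vectors is zero, and since $S$ is injective as a map $R^{m+n}\to R^{m+n}$ (its determinant is a nonzerodivisor in the domain $R$), the difference is zero.

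I do not expect a serious obstacle here; the statement is essentially a bookkeeping consequence of Cramer's rule once the correspondence $f=ga+hb \Leftrightarrow (\text{coeff vector})\cdot S$ is set up carefully. The one point that needs genuine care is getting the index conventions and the ordering of coordinates in the Sylvester matrix exactly right — the degree bounds $\deg a\le n-1$, $\deg b\le m-1$, the total degree $m+n-1$ of $f$, and the fact that $a$ is paired with $g$ (degree $m$) while $b$ is paired with $h$ (degree $n$) — so that the matrix identity genuinely reproduces the claimed polynomial identity. A secondary subtlety is making sure the entries of $\mathrm{adj}(S)$ are seen to lie in the subring $\bZ[g_i,h_j]$ (no $f_k$'s appear), so that after multiplying by the $f$-coefficients the resulting $A_i,B_j$ land in all of $R$ and are polynomial, not merely rational, in the $f_k$. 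Both of these are routine to verify but worth stating explicitly. The cleanest write-up simply records the matrix, invokes $S\cdot\mathrm{adj}(S)=\det(S)I$, names the coordinates, and then disposes of uniqueness via invertibility of $S$ over $\mathrm{Frac}(R)$.
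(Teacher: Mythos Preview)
Your approach is correct and coincides with the paper's: the lemma is presented there as a direct consequence of the adjugate identity $S\cdot\mathrm{adj}(S)=\det(S)\,I$ applied to the Sylvester matrix, exactly as you propose. Your explicit uniqueness argument (injectivity of $S$ over the domain $R$ because $\det S=\mathrm{Res}(g,h)$ is a nonzerodivisor) simply fills in a step the paper leaves implicit.
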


\begin{Lemma}\label{resultant}
Let $g\in R_s$, $h\in R_t$ be polynomials coprime in the ring $\bC[x,y]$ 
and monic with respect to $y$. 
Then for every non-negative integer $i$ and for every $f\in R_{s+t+i}$ 
such that $\deg_y f<\deg_y gh$ there exist unique 
polynomials $\phi\in R_{t+i}$ and $\psi\in R_{s+i}$ such that
\begin{equation}
f=g\phi + h\psi, \quad \deg_y\psi<\deg_y g, \quad \deg_y\phi<\deg_y h .
\end{equation}
Moreover the coefficients of $\phi$ and $\psi$ are of the form $[\phi]_{ij}=R^{-1}\Phi_{ij}$, 
$[\psi]_{ij}=R^{-1}\Psi_{ij}$ where 
$R:={\rm  Res}_y(g,h)|_{x=1}$ and $\Phi_{ij}$, $\Psi_{ij}$ are polynomials depending on: 
coefficients of~$f$, coefficients of~$g$, and coefficients of $h$. 
\end{Lemma}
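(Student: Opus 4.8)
The plan is to reduce everything to the Bézout-type identity of Lemma~\ref{resultant0} applied over the ground ring $E=\bC[x]$, and then track weights carefully. Since $g\in R_s$ is monic in $y$ of degree $\mu:=\deg_y g$ and $h\in R_t$ is monic in $y$ of degree $\nu:=\deg_y h$, I first want to show that $g$ and $h$, viewed as polynomials in $y$ with coefficients in $\bC[x]$, are coprime in $\bC(x)[y]$. Indeed, if they had a common factor in $\bC(x)[y]$, by Gauss's lemma they would have a common factor in $\bC[x][y]=\bC[x,y]$ of positive $y$-degree, contradicting the hypothesis that $g,h$ are coprime in $\bC[x,y]$. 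Hence $\mathrm{Res}_y(g,h)\in\bC[x]$ is a nonzero polynomial, and $R:=\mathrm{Res}_y(g,h)|_{x=1}$ — I would note that one should read the lemma as asserting this specialization is nonzero, which holds because the resultant is quasi-homogeneous in $x$ (being a polynomial in the coefficients $g_i,h_j$, each of which is a monomial in $x$ times a scalar), so $\mathrm{Res}_y(g,h)=c\,x^d$ for some $c\neq 0$ and $d\geq 0$, and thus $R=c\neq 0$.

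Next I invoke Lemma~\ref{resultant0} with the ground ring $E=\bC[x]$: writing $g=\sum g_i(x)y^i$, $h=\sum h_j(x)y^j$ and the given $f$ of $y$-degree $<\mu+\nu$ as $f=\sum f_k(x)y^k$, I obtain polynomials $a(y)=\sum_{i=0}^{\nu-1}A_i y^i$ and $b(y)=\sum_{i=0}^{\mu-1}B_i y^i$ with $A_i,B_i\in\bC[x]$ (obtained by substituting the $g_i,h_j,f_k$ into the universal integer polynomials of Lemma~\ref{resultant0}) such that $ga+hb=\mathrm{Res}_y(g,h)\,f$. Dividing formally by the constant $R$ is not quite enough because $\mathrm{Res}_y(g,h)=c\,x^d$ carries a power of $x$; so instead I set $\phi:=a/(c\,x^d)$ and $\psi:=b/(c\,x^d)$ only after checking that $x^d$ divides $a$ and $b$. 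That divisibility, together with the weight bookkeeping, is the technical core. I would establish it via the weight grading: in the equation $ga+hb=c\,x^d f$, decompose $a$ and $b$ into weighted-homogeneous parts. Using Lemma~\ref{division} (division of the quasi-homogeneous $c x^d f\in R_{s+t+i}$, wait — more precisely: since $g\in R_s$ is monic in $y$, repeatedly apply Lemma~\ref{division} to extract, within the graded ring, the quotient $\phi$ and remainder $h\psi$), one sees that the unique solution $\phi,\psi$ to $f=g\phi+h\psi$ with the prescribed degree bounds, if it exists over $\bC(x)$, must have $\phi\in R_{t+i}$ and $\psi\in R_{s+i}$, because the quasi-homogeneous decomposition of any solution produces a solution in those specific graded pieces, and uniqueness (from coprimality over $\bC(x)$) forces the solution to be exactly that piece.

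Concretely, the cleanest route: first prove existence and uniqueness of $\phi,\psi$ over the field $\bC(x)$ purely from coprimality of $g,h$ in $\bC(x)[y]$ plus the degree constraints (standard, via the isomorphism $\bC(x)[y]/(gh)\cong\bC(x)[y]/(g)\times\bC(x)[y]/(h)$). Then show these unique solutions are quasi-homogeneous of the claimed weights: apply the weight operator, i.e. replace $x\mapsto \lambda^a x$, $y\mapsto\lambda^b y$ where $\omega=(a,b)$; since $f,g,h$ are quasi-homogeneous, $\lambda^{-(t+i)}\phi(\lambda^a x,\lambda^b y)$ and $\lambda^{-(s+i)}\psi(\lambda^a x,\lambda^b y)$ also solve the system, so by uniqueness they equal $\phi,\psi$, which is exactly quasi-homogeneity. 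Finally, for the shape of the coefficients: from $ga+hb=\mathrm{Res}_y(g,h)f=c x^d f$ and the fact that $\phi=a/(cx^d)$, $\psi=b/(cx^d)$ lie in $\bC[x,y]$ (they are genuine polynomials, being the quasi-homogeneous solutions just constructed), each coefficient $[\phi]_{ij}$ equals a coefficient of $a$ divided by $c x^d$; but $a$'s coefficients are the universal $\bZ$-polynomials of Lemma~\ref{resultant0} evaluated at the $g_i,h_j,f_k$, and $c x^d$ evaluated at $x=1$ is $R$. Substituting $x=1$ throughout (which is legitimate since $[\phi]_{ij}$ is a scalar — the coefficient of a fixed monomial) yields $[\phi]_{ij}=R^{-1}\Phi_{ij}$ with $\Phi_{ij}\in\bZ[\text{coeffs of }f,g,h]$, and likewise for $\psi$.

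The main obstacle I anticipate is the interplay between the two normalizations of the resultant — the full polynomial $\mathrm{Res}_y(g,h)\in\bC[x]$ versus its specialization $R$ at $x=1$ — and making rigorous that the $x$-power $x^d$ in $\mathrm{Res}_y(g,h)$ divides the Bézout cofactors $a,b$ cleanly so that dividing by it keeps us in $\bC[x,y]$ rather than $\bC[x,x^{-1},y]$. I expect the quasi-homogeneity argument (the $\lambda$-substitution combined with uniqueness over $\bC(x)$) to resolve this, since it shows the true solution lives in $R_{t+i}$ and $R_{s+i}$ a priori, and then matching it against $a/(cx^d)$ forces the divisibility for free; but one must be careful that the degree constraints $\deg_y\psi<\deg_y g$, $\deg_y\phi<\deg_y h$ are preserved under the passage between $\bC(x)$ and the graded ring, which is where Lemma~\ref{division} does its work.
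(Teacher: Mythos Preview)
Your strategy is sound and differs from the paper's in the existence step. The paper does not construct the solution over $\bC(x)$ at all; instead it invokes \cite[Lemma~2.5]{proc} to obtain directly quasi-homogeneous $a\in R_{t+i}$, $b\in R_{s+i}$ with $f=ga+hb$ (no degree bound yet), and then corrects via the graded division of Lemma~\ref{division}: writing $a=qh+r$ yields $\phi:=r$, $\psi:=qg+b$, and one checks $\phi\in R_{t+i}$, $\psi\in R_{s+i}$ with the required $y$-degree bounds. Uniqueness and the coefficient formula are handled exactly as you propose: substitute $x=1$, obtain $G\Phi+H\Psi=F$ with $G=g(1,y)$ etc., note that ${\rm Res}_y(g,h)$ is a nonzero monomial so $R={\rm Res}(G,H)\neq 0$, and read off the coefficients from Lemma~\ref{resultant0}. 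So the two proofs converge for the second half; your route trades the external citation for the $\lambda$-scaling argument, which makes it self-contained.

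There is one point you flag as the ``main obstacle'' but do not actually close. The $\lambda$-substitution together with uniqueness over $\bC(x)$ shows only that $\phi$ is weighted-homogeneous as an element of $\bC(x)[y]$; it does \emph{not} by itself place $\phi$ in $R_{t+i}\subset\bC[x,y]$, since for instance $x^{-1}$ is weighted-homogeneous. The extra line you need is this: writing $\phi=\sum_{j<\nu}p_j(x)y^j$, homogeneity forces each $p_j$ to be a monomial $c_j x^{e_j}$ with $ae_j=t+i-bj$; since $h\in R_t$ is monic in $y$ of degree $\nu$ one has $t=b\nu$, hence $ae_j=b(\nu-j)+i\geq 0$ for $0\leq j<\nu$, so $e_j\geq 0$ whenever $a>0$ (the case $a=0$ forces $g,h$ to be powers of $y$ and is excluded by coprimality unless one of them is $1$). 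The same computation works for $\psi$. With this observation your $x^d$-divisibility of the B\'ezout cofactors follows, and the argument is complete.
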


\begin{proof}
By \cite[Lemma~2.5]{proc} there exist polynomials $a\in R_{t+i}$ and $b\in R_{s+i}$ such that
$f=ga + hb$.  Let us perform a polynomial division $a={q}h+r$ and let $\phi=r$,  
$\psi={q}g+b$. Then $\deg_y \phi <\deg_y h$ and 
$$ f=ga + hb=g({q}h+r)+hb=gr+h({q}g+b)=g\phi + h\psi. $$
 
By Lemma~\ref{division} we have $\phi\in R_{t+i}$, hence 
$h\psi=f-g\phi \in R_{s+t+i}$ which implies that $\psi \in R_{s+i}$.  
Moreover $\deg_y h\psi \leq \max(\deg_y f, \deg_y g\phi)<\deg_y gh$ which gives
$\deg_y \psi <\deg_y g$.

\medskip
Now we will prove uniqueness. 
Suppose that $\phi_1\in R_{t+i}$, $\psi_1\in R_{s+i}$ are polynomials such that 
$f=g\phi_1 + h\psi_1$, $\deg_y\psi_1<\deg_y g$, $\deg_y\phi_1<\deg_y h$.
Then $g(\phi_1-\phi)=h(\psi-\psi_1)$. Since $g$ and $h$ are coprime, $g$ divides $\psi-\psi_1$ which 
together with inequality $\deg_y(\psi-\psi_1)<\deg_y g$ gives $\psi-\psi_1=0$.
In the same way we can show that $\phi_1=\phi$. 

Let $F=f(1,y)$, $G=g(1,y)$, $H=h(1,y)$, $\Phi=\phi(1,y)$, $\Psi=\psi(1,y)$. Then 
$$  G\Phi+H\Psi=F .
$$

By the assumptions that $g$, $h$  are coprime in $\bC[x,y]$ we get ${\rm Res}_y(g,h)\neq 0$. 
By weighted homogeneous property of resultants ${\rm Res}_y(g,h)$ is a monomial. 
Hence ${\rm Res}(G,H)={\rm Res}_y(g,h)|_{x=1}$ is nonzero.  
Hence, by Lemma~\ref{resultant0} polynomials $\Phi$ and $\Psi$ 
and so $\phi$ and $\psi$ have coefficients described in the second part of the lemma. 
\end{proof}

\begin{Lemma}\label{RSef}
Let $f\in \mathbb{C}[[x]][y]$ be a Weierstrass polynomial and 
let $\omega$ be a weight vector such that $\ini_{\omega} f$ and $f$ 
have the same degree with respect to $y$.
Assume that $\ini_{\omega} f = GH$, where $G$, $H$ 
are Weierstrass polynomials coprime in $\mathbb{C}[x,y]$.
Then there are unique Weierstrass polynomials $g,h\in \mathbb{C}[[x]][y]$
such that $f=gh$, $\ini_{\omega} g = G$, and $\ini_{\omega} h = H$. 

Moreover 
if $f= f_{s+t} + f_{s+t+1} + \cdots$, 
$g= g_{s} + g_{s+1} + \cdots$,
$h= h_{t} + h_{t+1} + \cdots$
are representations of $f$, $g$ $h$ as sums of quasi-homogeneous polynomials
and $R:=Res(G,H)|_{x=1}$,  then 
the coefficients of $R^{2i-1}g_{s+i}$ and $R^{2i-1}h_{t+i}$ are polynomials with integer coefficients depending on:
coefficients of $f_{s+t+k}$ for $1\leq k\leq i$, coefficients of $g_s$, and coefficients of $h_t$ for 
$i=1,2,\dots$.
\end{Lemma}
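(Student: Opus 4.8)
The plan is to prove existence and uniqueness of the factorization $f=gh$ by constructing the quasi-homogeneous components $g_{s+i}$, $h_{t+i}$ inductively on $i$, at each step solving a linear equation of Hensel type that is handled by Lemma~\ref{resultant}. We start by noting that $\deg_y f = \deg_y(\ini_\omega f) = \deg_y G + \deg_y H$, so setting $m=\deg_y G$, $n=\deg_y H$ we must have $\deg_y g = m$, $\deg_y h = n$, and $g$, $h$ monic in $y$ (Weierstrass). Write $f = \sum_{k\ge 0} f_{s+t+k}$, and seek $g = \sum_{k\ge 0} g_{s+k}$, $h = \sum_{k\ge 0} h_{t+k}$ with $g_s = G$, $h_t = H$; each $g_{s+k}$ (for $k\ge 1$) has $\deg_y g_{s+k} < m$ and each $h_{t+k}$ ($k\ge 1$) has $\deg_y h_{t+k} < n$, because $g$, $h$ are monic of degrees $m$, $n$ and the top-degree monomials $y^m$, $y^n$ already sit in $g_s$, $h_t$ respectively (any $y^m$ appearing with nonzero coefficient in $g_{s+k}$ would have weight $bm$, forcing $s+k = s$). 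Comparing the weight-$(s+t+i)$ components of $f = gh$ gives, for each $i\ge 1$,
\begin{equation}\label{eq:Hensel-step}
 G\, h_{t+i} + H\, g_{s+i} = f_{s+t+i} - \sum_{k=1}^{i-1} g_{s+k}\, h_{t+i-k} =: \widetilde f_i,
\end{equation}
where $\widetilde f_i \in R_{s+t+i}$ depends only on $f_{s+t+i}$ and the previously constructed $g_{s+k}$, $h_{t+k}$ for $1\le k\le i-1$. One checks that $\deg_y \widetilde f_i < m+n = \deg_y(GH)$: the term $f_{s+t+i}$ has $y$-degree at most... well, $f$ is Weierstrass of degree $m+n$, but its lower weighted-homogeneous pieces have $y$-degree strictly less than $m+n$ (the monomial $y^{m+n}$ has weight exactly $s+t$ and lies in $f_{s+t}$), so $\deg_y f_{s+t+i} < m+n$; and each product $g_{s+k}h_{t+i-k}$ with $1\le k\le i-1$ has $y$-degree $< m+n$ since $\deg_y g_{s+k}<m$. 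Hence Lemma~\ref{resultant} (with the roles $g\rightsquigarrow H$, $h\rightsquigarrow G$, $\phi\rightsquigarrow g_{s+i}$, $\psi\rightsquigarrow h_{t+i}$) applies and yields unique $g_{s+i}\in R_{s+i}$, $h_{t+i}\in R_{t+i}$ with $\deg_y g_{s+i}<m$, $\deg_y h_{t+i}<n$ solving \eqref{eq:Hensel-step}. This simultaneously gives existence and uniqueness of the factorization (uniqueness of the full series $g$, $h$ follows since the components are uniquely determined step by step).

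For the coefficient bound, I would track denominators through the induction. Lemma~\ref{resultant} tells us that $R\cdot g_{s+i}$ and $R\cdot h_{t+i}$ have coefficients that are polynomials with integer coefficients in the coefficients of $\widetilde f_i$, $G$, $H$. Feeding in the inductive hypothesis that $R^{2k-1}g_{s+k}$ and $R^{2k-1}h_{t+k}$ have integer-polynomial coefficients in the data for $1\le k\le i-1$, the product $g_{s+k}h_{t+i-k}$ in $\widetilde f_i$ carries a denominator $R^{(2k-1)+(2(i-k)-1)} = R^{2i-2}$, so $R^{2i-2}\widetilde f_i$ has integer-polynomial coefficients. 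Applying Lemma~\ref{resultant} then shows $R\cdot R^{2i-2} g_{s+i} = R^{2i-1}g_{s+i}$ and $R^{2i-1}h_{t+i}$ have integer-polynomial coefficients in the coefficients of $f_{s+t+k}$ ($1\le k\le i$), $g_s=G$, $h_t=H$. The base case $i=1$ reads $R\, g_{s+1}$, $R\, h_{t+1}$ integral, which is exactly Lemma~\ref{resultant} applied to $\widetilde f_1 = f_{s+t+1}$, matching the claimed exponent $2\cdot 1 - 1 = 1$. This closes the induction.

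The main obstacle I anticipate is the bookkeeping that justifies \emph{applying} Lemma~\ref{resultant} at each step — specifically verifying the two hypotheses that are easy to state but need care: that $G$ and $H$ (the initial forms) are genuinely coprime \emph{in $\mathbb C[x,y]$} (given in the statement) and monic in $y$ (from Weierstrass), and the degree condition $\deg_y\widetilde f_i<\deg_y(GH)$, which relies on the observation that in a Weierstrass polynomial the pure power $y^{\deg f}$ has minimal weight and hence appears only in the lowest quasi-homogeneous component. A secondary subtlety is confirming that the solution produced has the correct \emph{weight}, i.e. $g_{s+i}\in R_{s+i}$ rather than just $\deg_y<m$ — but this is precisely the content of Lemma~\ref{resultant}, which returns $\phi\in R_{t+i}$, $\psi\in R_{s+i}$ automatically, so once we match the weight $s+t+i$ of $\widetilde f_i$ to the index $i$ in Lemma~\ref{resultant}, the grading is handled for free. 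The exponent accounting ($2i-1$ from $(2i-2)+1$) is the other place where an off-by-one slip is easy; I would double-check it against the base case as above.
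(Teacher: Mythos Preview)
Your proposal is correct and follows essentially the same inductive Hensel-lifting argument as the paper: decompose $f=gh$ into quasi-homogeneous components, isolate the equation $g_s h_{t+i}+h_t g_{s+i}=f_{s+t+i}-\sum_{0<k<i}g_{s+k}h_{t+i-k}$, and apply Lemma~\ref{resultant} after clearing the accumulated denominator $R^{2i-2}$. Your write-up is in fact more explicit than the paper's about why $\deg_y\widetilde f_i<\deg_y(GH)$ and why the higher components $g_{s+k}$, $h_{t+k}$ have strictly smaller $y$-degree, so nothing is missing.
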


\begin{proof}
By assumptions  $g_s=G$, $h_t=H$ and $f_{s+t}=g_s h_t$. 
Writing the left hand side of 
$(g_s + g_{s+1} + \cdots)(h_t + h_{t+1} + \cdots)=f_{s+t} + f_{s+t+1} + \cdots$ 
as a sum of quasi-homogeneous polynomials we see that the equality $gh=f$ is equivalent with the system of equations 
$$ \sum_{k+l=i} g_{s+k}h_{t+l} = f_{s+t+i} \quad \mbox{for $i=1,2,\dots$} $$
Using these equations and Lemma \ref{resultant} we can compute recursively polynomials $g_{s+i}$, $h_{t+i}$.  

Assume that polynomials $g_{s+j}$, $h_{t+j}$ are already computed for $1\leq j<i$ 
and satisfy the statement of the lemma. Then 
$$ g_{s}h_{t+i} + g_{s+i}h_{t} = f_{s+t+i}-\sum_{\substack{k+l=i,\\k,l<i}} g_{s+k}h_{t+l}. 
$$
Multiplying the above equation by $R^{2i-2}$ we get that 
$$ g_{s}\cdot R^{2i-2} h_{t+i} + h_{t}\cdot R^{2i-2} g_{s+i} = 
    R^{2i-2} f_{s+t+i}-\sum_{\substack{k+l=i,\\k,l<i}} R^{2k-1}g_{s+k}\cdot R^{2l-1}h_{t+l}
$$
is a quasi-homogeneous polynomial of weight $s+t+i$, degree $<\deg_y g_s h_t$ and 
coefficients depending polynomially on coefficients of $f_{s+t}$, \dots, $f_{s+t+i}$, $g_s$, and $h_t$. Thus by Lemma~\ref{resultant} the coefficients of $R^{2i-1}g_{s+i}$, $R^{2i-1}h_{t+i}$  are polynomials depending on: coefficients of: $f_{s+t+k}$ for $1\leq k\leq i$, coefficients of $g_s$, and coefficients of $h_t$.
\end{proof}

\section{Coefficients of a Puiseux parametrisaton}\label{sect:coeff}

The Puisieux parametrisations 
of curves from a given equisingularity class are characterized by the following lemma.

\begin{Lemma}
Let $(b_0,b_1,\dots,b_m)$ be a sequence of positive integers 
such that $b_0=n$ and $b_1<b_2<\dots<b_m$. 
Set $e_k:={\rm gcd}(b_0,b_1,\dots,b_k)$. 
Assume that $e_0>e_1>\cdots >e_m=1$.
Then a Puiseux parametrisation $(T^n,\alpha(T))$ has characteristic $(b_0,b_1,\dots,b_m)$ if and only if 
$$ \{ b_1,\dots,b_m \} \subset {\rm supp}(\alpha) \subset I,$$
where 
\begin{equation} \label{ind}
I=e_0\bN_{+} \cup \bigcup_{i=1}^m e_i\bN\cap[b_i,+\infty).\end{equation}
\end{Lemma}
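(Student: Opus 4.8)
The plan is to unwind the recursive definition of the characteristic directly; the whole argument is bookkeeping with the divisibility chain carried by the $e_k$'s. First I would record the elementary facts used repeatedly: since $e_k={\rm gcd}(b_0,\dots,b_k)$ one has $e_m\mid e_{m-1}\mid\cdots\mid e_1\mid e_0$, equivalently $e_0\bN\subseteq e_1\bN\subseteq\cdots\subseteq e_m\bN=\bN$; moreover, because $e_k={\rm gcd}(e_{k-1},b_k)<e_{k-1}$, we have $e_{k-1}\nmid b_k$, i.e.\ $b_k\notin e_{k-1}\bN$, for $k=1,\dots,m$. I would also note once and for all that as soon as $\{b_1,\dots,b_m\}\subseteq{\rm supp}(\alpha)$, the pair $(T^n,\alpha(T))$ is a Puiseux parametrisation, since ${\rm gcd}(\{n\}\cup{\rm supp}(\alpha))$ divides ${\rm gcd}(n,b_1,\dots,b_m)=e_m=1$, so its characteristic is well defined.

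For the implication ``$\Rightarrow$'', assume $(T^n,\alpha(T))$ has characteristic $(b_0,\dots,b_m)$. Each $b_k$ with $k\ge1$ is, by clause (ii), the minimum of a nonempty subset of ${\rm supp}(\alpha)$, hence $\{b_1,\dots,b_m\}\subseteq{\rm supp}(\alpha)$. To show ${\rm supp}(\alpha)\subseteq I$, take $j\in{\rm supp}(\alpha)$. If $j<b_1$ then $j\in e_0\bN$: otherwise $j$ would be an element of ${\rm supp}(\alpha)\setminus e_0\bN$ lying below $b_1=\min({\rm supp}(\alpha)\setminus e_0\bN)$, a contradiction; since $j\ge1$ this gives $j\in e_0\bN_{+}\subseteq I$. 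Otherwise there is a unique $k\in\{1,\dots,m\}$ with $b_k\le j<b_{k+1}$ (convention $b_{m+1}=+\infty$); from $b_{k+1}=\min({\rm supp}(\alpha)\setminus e_k\bN)$ and $j<b_{k+1}$ we get $e_k\mid j$, and with $j\ge b_k$ this yields $j\in e_k\bN\cap[b_k,+\infty)\subseteq I$.

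For ``$\Leftarrow$'', assume $\{b_1,\dots,b_m\}\subseteq{\rm supp}(\alpha)\subseteq I$, let $(b_0',\dots,b_{m'}')$ be the characteristic of $(T^n,\alpha(T))$, and set $e_k':={\rm gcd}(b_0',\dots,b_k')$. I would prove by induction on $k$ that $b_k'=b_k$ and $e_k'=e_k$ for $0\le k\le m$. The base case is $b_0'=n=b_0$. For the step, assume the claim for all indices $\le k$ with $k<m$; since $e_k'=e_k>1$, clause (iii) is not yet met and clause (ii) defines $b_{k+1}'=\min({\rm supp}(\alpha)\setminus e_k\bN)$. I would check that this minimum equals $b_{k+1}$ via three points: $b_{k+1}\in{\rm supp}(\alpha)$; $b_{k+1}\notin e_k\bN$ (recorded above, so in particular the set is nonempty); and every $j\in{\rm supp}(\alpha)$ with $j<b_{k+1}$ lies in $e_k\bN$. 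The last point is where ${\rm supp}(\alpha)\subseteq I$ enters: such a $j$ lies either in $e_0\bN\subseteq e_k\bN$, or in $e_i\bN\cap[b_i,+\infty)$ for some $i\ge1$, in which case $b_i\le j<b_{k+1}$ forces $i\le k$, whence $e_k\mid e_i$ and $j\in e_i\bN\subseteq e_k\bN$. Thus $b_{k+1}'=b_{k+1}$ and $e_{k+1}'={\rm gcd}(e_k,b_{k+1})=e_{k+1}$. Finally, the claim gives $e_m'=e_m=1$ while $e_k'=e_k>1$ for $k<m$, so by clause (iii) the characteristic terminates exactly at index $m$; hence $m'=m$ and $(b_0',\dots,b_{m'}')=(b_0,\dots,b_m)$.

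I do not expect a genuine obstacle here: the argument is a careful reading of the recursive definition, and the only points that need attention are the boundary cases — the interval $[1,b_1)$ versus the convention $b_0=n$, and the unbounded interval $[b_m,+\infty)$ — together with the systematic use of the inclusions $e_0\bN\subseteq\cdots\subseteq e_m\bN=\bN$. The one subtlety worth spelling out is that, once the $b_i$ are pinned down as consecutive minima, no spurious characteristic exponent can appear and the length of the characteristic is forced to be $m+1$, which is exactly what the hypothesis $e_m=1$ guarantees.
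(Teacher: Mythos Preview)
Your proof is correct and follows essentially the same approach as the paper's: both directions are handled by unwinding the recursive definition of the characteristic, with the backward direction done by induction showing $b_k'=b_k$. Your version is more explicit --- you spell out why $\min({\rm supp}(\alpha)\setminus e_k\bN)=b_{k+1}$ and why the characteristic terminates at length $m+1$ --- whereas the paper asserts these steps tersely, but the underlying argument is the same.
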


\begin{proof}
Assume that $(b_0,b_1,\dots,b_m)$ is the characteristic of $(T^n,\alpha(T))$. 
Then for every $j\in\{1,\dots,m\}$ we have $b_j \in {\rm supp}(\alpha)$ and consequently
$\{b_1,\dots,b_m\}\subset {\rm supp}(\alpha)$. 
Take an arbitrary $i\in {\rm supp}(\alpha)$. 
If $i<b_1$, then $e_0|i$ and thus $i\in e_0\mathbb{N}_+\subset I$. Assume that
$b_j\leq i< b_{j+1}$ for some $j\in\{1,\dots,m\}$. Then $i\in e_j\bN$. 
This implies that $i\in e_j\mathbb{N}\cap[b_j,\infty) \subset I$. 

Assume now that  
$$ \{ b_1,\dots, b_m \} \subset {\rm supp}(\alpha) \subset I$$
and a sequence $(b_0',b_1',\dots,b_t')$ is the characteristic of $(T^n,\alpha(T))$.

We will prove by induction on $k$ for $k\in\{0,\dots, m\}$ that  
$$(b_0',\dots, b_k')=(b_0,\dots, b_k).$$

Clearly $b_0'=b_0=n$.
By inductive assumption 
$(b_0',\dots, b_{k}')=(b_0,\dots, b_{k})$.
Then 
$b_{k+1}'=\min ( {\rm supp}(\alpha) \setminus e_k\bN)=b_{k+1}$.
We proved that $(b_0',\dots b_m')=(b_0,\dots b_m)$.
Since $\gcd(b_0',\dots b_m')=1$, we get the lemma.
\end{proof}

Consider a Puiseux parametrisation $(T^n,\alpha(T))$ and the increasing sequence $i_1<i_2<\cdots$ of all indices from the~set~(\ref{ind}). Let $A:=(A_{i})_{i\in I}$ and $C:=(C_{ij})_{(i,j)\in\mathbb{Z}_{\geq 0}^2}$ be sets  of new  variables. Then we say that  $Q\in\mathbb{C}[C,A]$ is $U_n$--{\it invariant} if
$$ Q(\varepsilon^{i_1} A_{i_1},\varepsilon^{i_2} A_{i_2},\dots) = Q(A_{i_1},A_{i_2},\dots) $$
for every $\varepsilon\in U_n$.

\section{Irreducible Weierstrass polynomials}
Let us start with the following lemma concerning complex roots of unity.

\begin{Lemma}\label{L:roots}
Let $n$, $i$ be integers, $n>0$ and let $d=n/\gcd(n,i)$. Then 
\begin{equation}\label{eq:group}
\{\,\varepsilon^i: \varepsilon\in U_n\,\} = U_d, 
\end{equation}
\begin{equation}\label{eq:roots}
\sum_{\varepsilon\in U_n} \varepsilon^i= 
\begin{cases}
n & \mbox{if $i\equiv 0 \mod n$}, \\ 
0    & \mbox{otherwise.}
\end{cases} .
\end{equation}
\end{Lemma}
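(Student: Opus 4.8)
The statement to prove is Lemma~\ref{L:roots}: for integers $n > 0$ and $i$, setting $d = n/\gcd(n,i)$, we have $\{\varepsilon^i : \varepsilon \in U_n\} = U_d$, and $\sum_{\varepsilon \in U_n} \varepsilon^i = n$ if $n \mid i$ and $0$ otherwise. Let me sketch a proof.

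The plan is to prove the two assertions separately, deducing \eqref{eq:roots} from \eqref{eq:group}. First I would dispose of trivial reductions: since $\gcd(n,i)=\gcd(n,-i)$ and $\varepsilon\mapsto\varepsilon^{-1}$ is a bijection of $U_n$, replacing $i$ by $-i$ alters neither side of \eqref{eq:group} nor of \eqref{eq:roots}; and if $i=0$ then $d=1$ and both identities hold trivially. So I may assume $i>0$.

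For \eqref{eq:group}, I would consider the group homomorphism $\chi\colon U_n\to\bC^{*}$, $\chi(\varepsilon)=\varepsilon^{i}$. Its image is a finite subgroup of $\bC^{*}$, and its kernel is $\{\varepsilon\in U_n:\varepsilon^{i}=1\}=U_n\cap U_i=U_{\gcd(n,i)}$, which has $\gcd(n,i)$ elements; hence $|\chi(U_n)|=n/\gcd(n,i)=d$. On the other hand, every $z=\varepsilon^{i}$ in the image satisfies $z^{d}=\varepsilon^{id}=(\varepsilon^{n})^{i/\gcd(n,i)}=1$, so $\chi(U_n)\subseteq U_d$. A subset of the finite set $U_d$ having the same cardinality $d=|U_d|$ is all of $U_d$, which proves \eqref{eq:group}.

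For \eqref{eq:roots}, note that the fibres of $\chi$ over its image are cosets of $\ker\chi$, so $\chi$ is exactly $(n/d)$-to-one onto $U_d$, giving
$$\sum_{\varepsilon\in U_n}\varepsilon^{i}=\frac{n}{d}\sum_{\eta\in U_d}\eta.$$
If $n\mid i$ then $d=1$ and the right-hand side equals $n$. If $n\nmid i$ then $d>1$; fixing a primitive $d$-th root of unity $\zeta$, multiplication by $\zeta$ permutes $U_d$, so $S:=\sum_{\eta\in U_d}\eta$ satisfies $\zeta S=S$, and since $\zeta\ne 1$ this forces $S=0$, whence the sum vanishes. (Alternatively one can compute $\sum_{\varepsilon\in U_n}\varepsilon^{i}=\sum_{k=0}^{n-1}\zeta_n^{ki}$ directly as a geometric series, where $\zeta_n$ is a fixed primitive $n$-th root of unity.)

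I do not expect any real obstacle here: the argument only uses that $U_n$ is cyclic and that a finite multiplicative subgroup of $\bC^{*}$ of order $d$ coincides with $U_d$. The only points needing a moment's care are the reduction to $i>0$ and the identity $U_n\cap U_i=U_{\gcd(n,i)}$, both entirely elementary.
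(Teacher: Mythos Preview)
Your proof is correct and follows essentially the same line as the paper's: both establish \eqref{eq:group} by identifying the image of the homomorphism $\varepsilon\mapsto\varepsilon^{i}$ as a subgroup of $\bC^{*}$ of order $d$ (the paper computes the order of a generator, you compute the kernel and use a cardinality argument), and both deduce \eqref{eq:roots} by noting that this map is $\gcd(n,i)$-to-one onto $U_d$ and then summing over $U_d$. Your write-up is slightly more detailed in justifying why $\sum_{\eta\in U_d}\eta=0$ for $d>1$, which the paper leaves implicit.
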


\begin{proof}
The set $\{\varepsilon^i: \varepsilon\in U_n\}$ is a cyclic subgroup of a multiplicative 
group $\bC^{*}$ generated by $\varepsilon_0^i$ where $\varepsilon_0$ is a primitive $n$th complex 
root of unity. Since the rank of $\varepsilon_0^i$ in $\bC^{*}$ equals $d$, we get~(\ref{eq:group}).

It follows from~(\ref{eq:group}) that $U_n\ni\varepsilon\longrightarrow \varepsilon^i\in U_d$ 
is a group epimorphism. Thus 
$$ \sum_{\varepsilon\in U_n}\varepsilon^i = \gcd(n,i) \sum_{\omega\in U_d} \omega .
$$
This gives~(\ref{eq:roots}).
\end{proof} 

In this section we deal with irreducible Weierstrass 
polynomials in a given equisingularity class 
$\mathcal{F}(b_0,b_1,\dots,b_m)$.  
Let $f=\sum_{ij}c_{ij}X^iY^j\in\mathbb{C}[[X]][Y]$ be an irreducible Weierstrass polynomial from $\mathcal{F}(b_0,b_1,\dots,b_m)$ 
and $(T^n,\alpha(T))$, where $\alpha=\sum_{i\in \mathbb{Z}_{\geq 0}}a_iT^i$, be a Puiseux parametrisation of $f(x,y)=0$.

 Recall that the support of $\alpha $ is contained in the set $I$ given in (\ref{ind}). In the sequel we treat coefficients of $\alpha$ and $f$ as indeterminants and we introduce   $\alpha^{*}:=\sum_{i\in I}A_iT^i$  and $f^{*}=\sum C_{ij}X^iY^j$, where $A:=(A_{i})_{i\in I}$ and  
$C:=(C_{ij})_{(i,j)\in\mathbb{Z}_{\geq 0}^2}$ are sets  of new  variables. We also denote by $A':=(A_i)_{i\in I, i<b_m}$.

 Let $\varepsilon$ be the $n$th primitive root of unity. Using Vieta's formula and the equaliy (\ref{equation2}) for $j=0,\dots,n-1$ we obtain 
$$\sum_{i=0}^{\infty} c_{ij}T^{ni}=(-1)^{n-j}s_{n-j}(\alpha(T),\alpha(\varepsilon T),\dots, \alpha(\varepsilon^{n-1}T)),$$
where  $s_j\in\mathbb{C}[X_1,\dots,X_n]$ denotes the $j$th elementary symmetric polynomial. This implies that for every $i\in\mathbb{Z}_{\geq 0}$ and $j\in\{0.\dots,n-1\}$ there exists a  polynomial $F_{ij}\in\mathbb{C}[A]$ such that $$c_{ij}=F_{ij}(a_{i_1},a_{i_2},\dots),$$ where $i_1<i_2<\cdots$ is the increasing sequence of all indices from $I$.  
Take any $i,t\in\mathbb{Z}_{\geq 0}$ and $j\in\{0,\dots,n\}$. Then $F_{ij}(\varepsilon^{ti_1}a_{i_1},\varepsilon^{ti_2}a_{i_2},\dots)$ corresponds to $(-1)^{n-j}s_{n-j}(\alpha(\varepsilon^t T),\alpha(\varepsilon^{t+1} T),\dots, \alpha(\varepsilon^{t+n-1} T))$. Since $s_j$ is symmetric, we deduce that $F_{ij}$ is a $U_n$--invariant polynomial. Thus we introduce the following notation: $$F:=(F_{ij})_{(i,j)\in\mathbb{Z}_{\geq 0}^2}.$$

\begin{Remark}
Let $W$ be a polynomial from $\mathbb{C}[C]$. Then after the substitution $C=F$ the polynomial $W$ becomes  $U_n$--invariant.
\end{Remark}

\begin{Lemma}\label{lemma1}
\begin{itemize}
\item[{\rm (i)}] There exist a polynomial $M\in\mathbb{C}[C,A']$ and a monomial $N$ in variables $A_{b_1},\dots,A_{b_{m-1}}$ such that 
\begin{equation}\label{eq:2}
A_{b_m}^{n_m}=\tfrac{M(F,A')}{N}.
\end{equation}
\item[{\rm (ii)}]  For every $i\geq b_m$ there exist 
$W_i\in\bC[C,A',\tfrac{1}{A_{b_1}},\dots,\tfrac{1}{A_{b_{m-1}}}, \tfrac{1}{A_{b_m}^{n_m}}]$
such that after substituting $C=F$ we have  $$A_i = W_i A_{b_m}^{j_i}$$
where $j_i$ is the~unique integer such that $0\leq j_i<n_m$ and $b_m j_i \equiv i \mod n_m$.
\end{itemize}
\end{Lemma}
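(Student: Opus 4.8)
The plan is to deduce both parts by evaluating the generic Weierstrass polynomial $f^{*}=\sum C_{ij}X^{i}Y^{j}$ along truncations of the generic Puiseux series $\alpha^{*}=\sum_{i\in I}A_{i}T^{i}$ and extracting a single Taylor coefficient. Write $d:=n_{m}=e_{m-1}$; from the description of $I$ in (\ref{ind}) one reads off at once that $I\cap[1,b_{j})\subseteq e_{j-1}\bN$ for $1\le j\le m$, in particular $I\cap[1,b_{m})\subseteq d\bN$, while $\gcd(d,b_{m})=e_{m}=1$. We also use the defining relation $f^{*}(T^{n},Y)\big|_{C=F}=\prod_{\varepsilon\in U_{n}}\bigl(Y-\alpha^{*}(\varepsilon T)\bigr)$, Lemma~\ref{L:roots}, and the chain $U_{n}=U_{e_{0}}\supset U_{e_{1}}\supset\cdots\supset U_{e_{m-1}}=U_{d}\supset U_{1}$.

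For (i), put $\beta^{*}(T):=\sum_{k\in I,\ k<b_{m}}A_{k}T^{k}\in\bC[A'][T]$ and $P(T):=f^{*}(T^{n},\beta^{*}(T))$; then $P\in\bC[C,A'][[T]]$, and after the substitution $C=F$ we get $P(T)=\prod_{\varepsilon\in U_{n}}\bigl(\beta^{*}(T)-\alpha^{*}(\varepsilon T)\bigr)$. I would analyse the factors: for $\varepsilon\in U_{d}$ we have $\beta^{*}(\varepsilon T)=\beta^{*}(T)$ (support in $d\bN$), so the factor is $-A_{b_{m}}\varepsilon^{b_{m}}T^{b_{m}}+\cdots$; for $\varepsilon\in U_{e_{j-1}}\setminus U_{e_{j}}$ with $j\le m-1$ the smallest index $k\in I$, $k<b_{m}$, with $\varepsilon^{k}\ne 1$ is exactly $b_{j}$ — because $I\cap[1,b_{j})\subseteq e_{j-1}\bN$ forces $\varepsilon^{k}=1$ there, while $\varepsilon^{b_{j}}\ne1$ since $\mathrm{ord}(\varepsilon)\mid e_{j-1}$ but $\mathrm{ord}(\varepsilon)\nmid e_{j}=\gcd(e_{j-1},b_{j})$ — so the factor is $A_{b_{j}}(1-\varepsilon^{b_{j}})T^{b_{j}}+\cdots$ with nonzero leading coefficient. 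Multiplying, $P\big|_{C=F}$ has $T$-order exactly $L:=n_{m}b_{m}+\sum_{j=1}^{m-1}(e_{j-1}-e_{j})b_{j}$, and its coefficient of $T^{L}$ equals $c_{0}A_{b_{m}}^{n_{m}}N$ with $N:=\prod_{j=1}^{m-1}A_{b_{j}}^{e_{j-1}-e_{j}}$ and $c_{0}\in\bC^{*}$ an explicit product of cyclotomic units. Hence $M:=c_{0}^{-1}\cdot(\text{coefficient of }T^{L}\text{ in }P)\in\bC[C,A']$ satisfies $M(F,A')=A_{b_{m}}^{n_{m}}N$, which is (\ref{eq:2}).

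For (ii) I would use strong induction on $i\in I$, $i\ge b_{m}$; the case $i=b_{m}$ is immediate since $j_{b_{m}}=1$ and $W_{b_{m}}=1$. For $i>b_{m}$ let $\gamma(T):=\sum_{k\in I,\ k<i}A_{k}T^{k}$ and $Q_{i}(T):=f^{*}(T^{n},\gamma(T))$, whose $T$-coefficients lie in $\bC[C,(A_{k})_{k\in I,\,k<i}]$. After $C=F$, $Q_{i}=\prod_{\varepsilon\in U_{n}}\bigl(\gamma(T)-\alpha^{*}(\varepsilon T)\bigr)$; the factor $\varepsilon=1$ is $-A_{i}T^{i}+\cdots$, the factors with $\varepsilon\in U_{d}\setminus\{1\}$ are now $A_{b_{m}}(1-\varepsilon^{b_{m}})T^{b_{m}}+\cdots$ (as $I\cap[1,b_{m})\subseteq d\bN$ but $\varepsilon^{b_{m}}\ne1$), and those with $\varepsilon\in U_{e_{j-1}}\setminus U_{e_{j}}$, $j\le m-1$, are as in (i). Thus the coefficient $\mu_{i}$ of $T^{L_{i}}$ in $Q_{i}$, with $L_{i}:=i+(n_{m}-1)b_{m}+\sum_{j=1}^{m-1}(e_{j-1}-e_{j})b_{j}$, satisfies the polynomial identity $\mu_{i}(F,(A_{k})_{k<i})=c_{1}A_{i}A_{b_{m}}^{n_{m}-1}N$ with $c_{1}\in\bC^{*}$. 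Substituting the inductive identities $A_{k}=W_{k}(F,A',\dots)A_{b_{m}}^{j_{k}}$ for $b_{m}\le k<i$ turns the left side into a Laurent polynomial in $A_{b_{m}}$ over the field $\mathbb{K}\subseteq\bC(A)$ generated by $A'$, the $F_{ij}$, and $A_{b_{1}}^{-1},\dots,A_{b_{m-1}}^{-1},M^{-1}$. A grading argument concludes: give $A_{k}$ weight $k$ and $C_{ij}$ weight $ni$, so $C=F$ preserves weights and $\mu_{i}$ is homogeneous of weight $L_{i}$; every generator of $\mathbb{K}$ has weight divisible by $n_{m}$ (for $F_{ij}$ since $n_{m}\mid n$; for $A'$ since $I\cap[1,b_{m})\subseteq n_{m}\bN$; for $A_{b_{j}}^{-1}$ since $n_{m}\mid e_{j}\mid b_{j}$; for $M^{-1}$ since $\mathrm{wt}(M)=\mathrm{wt}(N)+n_{m}b_{m}$), while $\gcd(b_{m},n_{m})=1$. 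Hence in the homogeneous expansion $A_{i}=\sum_{s}E_{s}A_{b_{m}}^{s}$ with $E_{s}\in\mathbb{K}$ homogeneous, each nonzero term has weight $i$, which forces $sb_{m}\equiv i\pmod{n_{m}}$, i.e. $s\equiv j_{i}\pmod{n_{m}}$; replacing $A_{b_{m}}^{s}$ by $A_{b_{m}}^{j_{i}}\cdot(A_{b_{m}}^{n_{m}})^{(s-j_{i})/n_{m}}$ and using $A_{b_{m}}^{n_{m}}=M(F,A')/N\in\mathbb{K}$ yields $A_{i}=W_{i}A_{b_{m}}^{j_{i}}$ with $W_{i}$ in the localised ring $\bC[C,A',A_{b_{1}}^{-1},\dots,A_{b_{m-1}}^{-1},A_{b_{m}}^{-n_{m}}]$.

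The main obstacle is the leading-term bookkeeping of the products $\prod_{\varepsilon\in U_{n}}(\cdots)$ — one must check that for $\varepsilon$ in each layer $U_{e_{j-1}}\setminus U_{e_{j}}$ the first disagreement index is exactly $b_{j}$ (this is where the explicit form of $I$ and the divisibility chain $e_{m-1}\mid\cdots\mid e_{0}$ enter), and then, in (ii), the weight argument that pins down the exponent of $A_{b_{m}}$ to $j_{i}$ itself and not merely to its residue mod $n_{m}$. Alternatively, in place of the direct product computation one may feed $f^{*}(T^{n},\beta^{*}(T)+Z)$ to the effective Hensel lemma (Lemma~\ref{resultant}, Lemma~\ref{RSef}), splitting off the degree-$n_{m}$ factor carrying the $Z$-roots of $T$-order $\ge b_{m}$ and reading $A_{b_{m}}^{n_{m}}$ and the higher coefficients off that factor; the denominators then appear as powers of $\mathrm{Res}$ of the two initial forms, which one checks reduce to powers of $A_{b_{1}},\dots,A_{b_{m-1}}$.
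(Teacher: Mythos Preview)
Your argument is correct. For part~(i) it coincides with the paper's: both evaluate $f^{*}(T^{n},\beta^{*}(T))$ (the paper writes this as $\widehat f(T,0)$ with $\widehat f=f^{*}(T^{n},Y+\lambda)$) and read off the lowest-order $T$-coefficient, obtaining the same monomial $N=\prod_{j<m}A_{b_j}^{e_{j-1}-e_j}$ and the same order $L=\bar b_m$.

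For part~(ii) the two routes genuinely differ. The paper follows precisely the ``alternative'' you sketch in your last paragraph: it sets $\widehat f(T,Y)=f^{*}(T^{n},Y+\lambda(T))$, groups its linear factors into $\widehat f_1\cdots\widehat f_m$ according to the layers $U_{e_{j-1}}\setminus U_{e_j}$, and applies the effective Hensel Lemma~\ref{RSef} iteratively --- first $m-1$ times to strip off $\widehat f_1,\dots,\widehat f_{m-1}$ and land the coefficients of $\widehat f_m$ in $\bC[C,A',A_{b_1}^{-1},\dots,A_{b_{m-1}}^{-1}]$, then once more to split off a linear factor $Y-\sum_{i\ge b_m}\hat A_iT^{i}$ of $\widehat f_m$; finally an averaging over $\varepsilon\in U_{n_m}$ via Lemma~\ref{L:roots} isolates the exponent $j_i$. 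Your primary route avoids Section~\ref{Section:Hensel} altogether: you extract one leading coefficient of $f^{*}(T^{n},\gamma(T))$ for each $i$, induct on $i$, and replace the root-of-unity averaging by the weight-mod-$n_m$ grading argument. This is more elementary and self-contained; the trade-off is that the paper's factorisation produces all the $W_i$ at once from the single linear factor, and makes the provenance of each denominator $A_{b_j}^{-1}$ explicit as a resultant, whereas your induction has to carry the denominators through step by step.
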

\begin{proof}
  Let us substitute $C=F$. Consider  $\lambda:=\sum_{i\in I, i<b_m}A_iT^i$ and 
 $$
\widehat{f}:=f^{*}(T^n,Y+\lambda(T))=
\prod_{\varepsilon\in U_n}\Bigl(Y+\lambda(T)-\sum_{i\in I}A_i\varepsilon^iT^i\Bigr).$$
Then  
$$\widehat{f}=\widehat{f}_1\cdots\widehat{f}_m,$$
where 
$$\widehat{f}_j=\prod_{\varepsilon\in U_{e_{j-1}}\setminus U_{e_j}}
\Bigl(Y+\lambda(T)-\sum_{i\in I}A_i\varepsilon^iT^i\Bigr) \ \ \ \ {\rm for }\ \ \  j=1,\dots,m-1,$$
$$\widehat{f}_m=\prod_{\varepsilon\in U_{e_{m-1}}}
\Bigl(Y-\sum_{i=b_m}^{\infty}A_i\varepsilon^i T^i\Bigr) .$$

Computing the weighted initial polynomials we get 
$${\rm in}_{(1,b_j)}\widehat{f}_j = 
Y^{-e_j}((Y+A_{b_j}T^{b_j})^{n_j}-A_{b_j}^{n_j}T^{b_j n_{j}})^{e_j} 
\quad \mbox{for $j=1,\dots,m-1$}, $$
$${\rm in}_{(1,b_m)}\widehat{f}_m=Y^{n_m}-A_{b_m}^{n_m}T^{n_m b_m}.$$

Hence
\begin{eqnarray*}
 \widehat{f}(T,0) &=& \prod_{j=1}^m \widehat{f}_j (T,0) = 
    -A_{b_m}^{n_m}T^{n_m b_m} \prod_{j=1}^{m-1} n_j^{e_j}A_{b_j}^{e_{j-1}-e_j} T^{(e_{j-1}-e_j)b_j} + \mbox{h.o.t.}  \\
  &=& \Bigl(- A_{b_m}^{n_m} \prod_{j=1}^{m-1} n_j^{e_j}A_{b_j}^{e_{j-1}-e_j} \Bigr) T^{\bar b_m}  + \mbox{h.o.t.}
\end{eqnarray*}
where $\bar b_m=e_{m-1}b_m+\sum_{j=1}^{m-1}(e_{j-1}-e_j)b_j$. 
On the other hand it follows from $\widehat{f}(T,0)= f^*(T^n,\lambda(T))$ 
that the coefficient of $\widehat{f}(T,0)$ at $T^{\bar b_m}$ is a polynomial depending 
on $C$ and $A'$.   Thus
\begin{equation}\label{eq:2a}
A_{b_m}^{n_m}=\tfrac{M}{N}
\end{equation}
where $M$ is a polynomial depending on $C$, $A'$ and $N$
is a monomial in variables $A_{b_1},\dots, A_{b_{m-1}}$.

\begin{Claim}
 For every integer $i$ such that  $1\leq i\leq m$ the coefficients of 
$\widehat{f}_i \cdots \widehat{f}_m$  are polynomials in $C,A',\tfrac{1}{A_{b_1}},\dots,\tfrac{1}{A_{b_{i-1}}}$. 

\end{Claim}
\begin{proof}
We will prove this claim by induction on $i$. 

For $m=1$ we have 
$\widehat{f}=\widehat{f}_1$
and there is nothing to prove. 
Thus assume that $m\geq 2$
and consider 
a weight $\omega:=(1,b_i)$. For $1\leq i< m$ we have 
$ 
\widehat{f}_{i}\cdots \widehat{f}_m = 
\widehat{f}_i \cdot (\widehat{f}_{i+1}\cdots \widehat{f}_m)
$, 
$\ini_{\omega}\widehat{f}_i = 
Y^{-e_i}((Y+A_{b_i}T^{b_i})^{n_i}-A_{b_i}^{n_i}T^{b_i n_{i}})^{e_i}$,
$\ini_{\omega}(\widehat{f}_{i+1} \cdots \widehat{f}_m)=Y^{e_i}$,  and 
${\rm Res}(\ini_{\omega}\widehat{f}_i,Y^{e_i})|_{T=1}=(n_i^{e_i}A_{b_i}^{e_{i-1}-e_i})^{e_i}$. 

If $i=1$, then using Lemma \ref{RSef} we obtain that coefficients of $\widehat{f}_{2}\cdots \widehat{f}_m$ are polynomials in $C$, $A'$, $\tfrac{1}{A_{b_1}}$. 

Take an arbitrary 
$i\in\{1,\dots,m-1\}$
and suppose that $\widehat{f}_{i}\cdots \widehat{f}_m$ has coefficients in $C$, $A'$, $\tfrac{1}{A_{b_1}},\dots,\tfrac{1}{A_{b_{i-1}}}$.  Since $\widehat{f}_{i}\cdots \widehat{f}_m=\widehat{f}_i\cdot(\widehat{f}_{i+1}\cdots \widehat{f}_m)$, by inductive assumption and by Lemma \ref{RSef} we have that coefficients of $\widehat{f}_{i+1}\cdots \widehat{f}_m$ are in $\mathbb{C}[C,A',\tfrac{1}{A_{b_1}},\dots,\tfrac{1}{A_{b_i}}]$ and thus the claim is proven.
\end{proof} 

Let $\omega :=(1,b_m)$ and $G:=Y-\hat A_{b_m} T^{b_m}$ be a linear factor of 
$Y^{n_m}-A_{b_m}^{n_m}T^{n_m b_m}$. Then 
$${\rm in}_{\omega}\widehat{f}_m = GH $$ 
where 
$$ H=Y^{n_m-1}+\hat A_{b_m}Y^{n_m-2}T^{b_m}+\cdots+\hat A_{b_m}^{n_m-1}T^{(n_m-1)b_m}.  
$$
Clearly ${\rm Res}(G,H)=n_m\hat A_{b_m}^{n_m-1}T^{(n_m-1)b_m}$.  
It follows from Claim~1 that coefficients of $\widehat{f}_m$ are polynomials in 
$C$, $A'$,  $\tfrac{1}{A_{b_1}},\dots,\tfrac{1}{A_{b_{m-1}}}$.
By Lemma~\ref{RSef} we have $\widehat{f}_m=gh$, where $\ini_{\omega} g =G$, $\ini_{\omega} h =H$ 
and coefficients $\hat A_i$ of 
$$ g=Y-\sum_{i=b_m}^{\infty}\hat A_i T^i $$	
are polynomials in 
$C,A',\tfrac{1}{A_{b_1}},\dots,\tfrac{1}{A_{b_{m-1}}},\tfrac{1}{\hat A_{b_m}^{n_m}},\hat A_{b_m}$.

Using~(\ref{eq:2}) and the identity 
$\hat A_{b_m}^{n_m}=A_{b_m}^{n_m}$ 
we can rewrite $\hat A_i$ as the sum 
\begin{equation}\label{eq:3}
\hat A_i=\sum_{j=0}^{n_m-1} M_{i,j}\hat A_{b_m}^j ,
\end{equation}
where $M_{i,j}$ are {polynomials} in  $C, A',\tfrac{1}{A_{b_1}}\dots,\tfrac{1}{A_{b_{m-1}}}, \tfrac{1}{A_{b_m}^{n_m}}.$ 


For every $\varepsilon\in U_{e_{m-1}}=U_{n_m}$, the polynomial 
$Y-\sum_{i=b_m}^{\infty}\varepsilon^iA_iT^i$
is a factor of $\widehat f_{m}$. Hence substituting in~(\ref{eq:3}) 
$\hat A_{b_m}:=\varepsilon^{b_m } A_{b_m}$ we get by 
$\hat A_i=\varepsilon^i A_i$ identities
$$ \varepsilon^i A_i = \sum_{j=0}^{n_m-1} M_{i,j} \cdot(\varepsilon^{b_m } A_{b_m})^j 
.$$

Writing the above formulas in the form 
$$ A_i = \sum_{j=0}^{n_m-1} \varepsilon^{b_m j-i} M_{i,j} A_{b_m}^j $$
and taking their sum over all $\varepsilon\in U_{n_m}$ we get 
$$ n_m A_i = \sum_{j=0}^{n_m-1} \sum_{\varepsilon\in U_{n_m}}\varepsilon^{b_m j-i} M_{i,j} A_{b_m}^j $$

Thus by Lemma~\ref{L:roots} we have  $A_i = W_i A_{b_m}^{j_i}$,
where $W_i=M_{i,j_i}$ and  $0\leq j_i<n_m$, $b_m j_i \equiv i \mod n_m$.

\end{proof}

\begin{Corollary}\label{c:1}
Let $k\in\{1,\dots,m\}$.
\begin{itemize}
\item[{\rm (i)}] There exist a polynomial $M$ depending  on $C$ and $A_i$ for $i<b_k$ and a monomial $N$
 in variables $A_{b_1},\dots, A_{b_{k-1}}$ such that after substituting $C=F$ we have
\begin{equation}\label{eq:Ab_k}
A_{b_k}^{n_k}=\tfrac{M}{N}.
\end{equation}
Moreover, for every $l\in\mathbb{Z}_{\geq 0}$ there exists an integer $r>l$ such that $M^r$ and $N^r$ are $U_n$--invariant.
\item[{\rm (ii)}] For every integer $i$ such that $b_k\leq i< b_{k+1}$ there exist a polynomial 
$W_i\in\bC[C,\tfrac{1}{A_{b_1}},\dots,\tfrac{1}{A_{b_{k-1}}}, \tfrac{1}{A_{b_k}^{n_k}}][A]$
depending on variables $A_j$ for $j<b_k$ such that after substituting $C=F$ we have
\begin{equation}\label{eq:A_i}
A_i = W_i A_{b_k}^{j_i}
\end{equation}
where $j_i$ is the unique integer such that $0\leq j_i<n_k$ and $b_k j_i \equiv i \mod e_{k-1}$.
\end{itemize}
\end{Corollary}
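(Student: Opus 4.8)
The plan is to mimic the proof of Lemma~\ref{lemma1}, but truncating the Puiseux series at the level of the $k$-th characteristic exponent instead of the $m$-th. First I would introduce the partial parametrisation $\alpha_k := \sum_{i\in I,\ i<b_{k+1}} A_i T^i$ and the auxiliary Weierstrass polynomial $\widehat f^{(k)} := f^*(T^n, Y+\lambda_k(T))$ where $\lambda_k := \sum_{i\in I,\ i<b_k}A_i T^i$, and factor it over $U_n$ exactly as in Lemma~\ref{lemma1}. The key point is that only the exponents $b_1,\dots,b_k$ of $\alpha$ matter for the structure up to weight $(1,b_k)$: grouping the roots according to the cosets of $U_{e_{k-1}}$ inside $U_n$ and of $U_{e_k}$ inside $U_{e_{k-1}}$, the factor $\widehat f^{(k)}_j$ for $j<k$ has $\ini_{(1,b_j)}\widehat f^{(k)}_j = Y^{-e_j}\bigl((Y+A_{b_j}T^{b_j})^{n_j}-A_{b_j}^{n_j}T^{b_j n_j}\bigr)^{e_j}$, while the ``tail'' factor $\widehat f^{(k)}_{\geq k}$ collecting the cosets of $U_{e_{k-1}}$ has $\ini_{(1,b_k)}\widehat f^{(k)}_{\geq k} = \bigl(Y^{n_k}-A_{b_k}^{n_k}T^{n_k b_k}\bigr)^{e_k}$. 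Evaluating $\widehat f^{(k)}(T,0)$ as a product over these factors and reading off the coefficient at the appropriate power $\bar b_k = e_{k-1}b_k + \sum_{j<k}(e_{j-1}-e_j)b_j$ gives, on one hand, $-A_{b_k}^{e_{k-1}}\prod_{j<k} n_j^{e_j} A_{b_j}^{e_{j-1}-e_j}$ up to higher order terms, and on the other hand a polynomial in $C$ and in the $A_i$ with $i<b_k$, since $\widehat f^{(k)}(T,0)=f^*(T^n,\lambda_k(T))$. Dividing out the monomial $N$ in $A_{b_1},\dots,A_{b_{k-1}}$ and noting $e_{k-1}=n_k\cdots$ is not quite $n_k$, so here I would be careful: in fact $A_{b_k}$ appears with exponent $e_{k-1}$, and $e_{k-1}/e_k = n_k$, so after also using that the relation is homogeneous one extracts $A_{b_k}^{n_k}=M/N$; alternatively one repeats the argument with $\widehat f^{(k)}_{\geq k}$ replaced by its linear-factor refinement as done at the end of Lemma~\ref{lemma1}'s proof. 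This establishes the displayed equation \eqref{eq:Ab_k}, which is part~(i).

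For part~(ii) I would run the same Hensel-lifting argument as in Lemma~\ref{lemma1}: prove by induction on $i$ (via Lemma~\ref{RSef}, exactly as in Claim~1) that the coefficients of $\widehat f^{(k)}_{i}\cdots \widehat f^{(k)}_{\geq k}$ lie in $\mathbb{C}[C,A', 1/A_{b_1},\dots,1/A_{b_{i-1}}]$, then split off a linear factor $G=Y-\hat A_{b_k}T^{b_k}$ from $\ini_{(1,b_k)}$ of the tail factor, apply Lemma~\ref{RSef} once more to get $g=Y-\sum_{i\geq b_k}\hat A_i T^i$ with $\hat A_i$ polynomial in $C$, the $A_j$ with $j<b_k$, $1/A_{b_1},\dots,1/A_{b_{k-1}}$, $1/\hat A_{b_k}^{n_k}$ and $\hat A_{b_k}$, reduce powers of $\hat A_{b_k}$ modulo the relation $\hat A_{b_k}^{n_k}=A_{b_k}^{n_k}$ to write $\hat A_i=\sum_{j=0}^{n_k-1}M_{i,j}\hat A_{b_k}^j$, substitute $\hat A_{b_k}=\varepsilon^{b_k}A_{b_k}$ for $\varepsilon$ ranging over $U_{e_{k-1}}$ (each such choice gives a genuine linear factor of the tail since the tail collects precisely the $U_{e_{k-1}}$-cosets), average over $\varepsilon$, and apply Lemma~\ref{L:roots}. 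The averaging kills all terms except the one with $b_k j \equiv i \bmod e_{k-1}$ — note here the modulus is $e_{k-1}$ rather than $n_k$ because $\varepsilon$ runs over $U_{e_{k-1}}$, which matches the statement — giving $A_i = W_i A_{b_k}^{j_i}$ with $W_i = M_{i,j_i}$ of the asserted form.

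The one genuinely new ingredient beyond Lemma~\ref{lemma1} is the $U_n$-invariance claim in part~(i): for every $l$ there is $r>l$ with $M^r$ and $N^r$ both $U_n$-invariant. The plan here is to track weighted-homogeneity. Under the substitution $C=F$, each $C_{ij}\mapsto F_{ij}$ is $U_n$-invariant, and more precisely $F_{ij}$, viewed in the $A_i$, is homogeneous of weight $\equiv 0 \bmod n$ with respect to the grading $\deg A_i = i$ — this is because $c_{ij}$ is the $T^{ni}$-coefficient of an elementary symmetric function of the $\alpha(\varepsilon^\nu T)$, hence weighted-homogeneous of weight $ni$. Therefore $M(F,A')$ is a sum of terms of the form (weight-$0$-mod-$n$ part) times (monomial in $A'$), so $M$ itself need not be weighted-homogeneous, but each of its weighted-homogeneous components has weight congruent modulo $n$ to some fixed residue; the same for $N$. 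Since $M/N = A_{b_k}^{n_k}$ has weight $n_k b_k = e_{k-1}b_k/e_k$, a $U_n$-invariant quantity iff $n \mid n_k b_k$ — which need not hold — one raises to a power: $(M/N)^r = A_{b_k}^{r n_k}$ is $U_n$-invariant as soon as $n \mid r n_k b_k$, i.e. for all multiples $r$ of $n/\gcd(n, n_k b_k)$, and among these one can choose $r>l$. The hardest part, and the place I would spend the most care, is making precise that $M$ (after the substitution) decomposes into pieces of controlled weight modulo $n$ so that a single power $r$ simultaneously makes $M^r$ and $N^r$ invariant; this reduces to the observation that $M$ and $N$ have all their monomials of weight in one fixed coset of $n\mathbb{Z}$, which follows from $M/N$ being weighted-homogeneous together with $M$ being, after substitution, a combination of the weighted-homogeneous $F_{ij}$'s with monomials in $A'$.
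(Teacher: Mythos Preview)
Your approach is genuinely different from the paper's and has a real gap at the point you yourself flagged as delicate.

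The paper does \emph{not} rerun the Lemma~\ref{lemma1} argument on a truncation of the original $f$; instead, for $k<m$ it passes to the $e_k$-th \emph{approximate root} $p=\sqrt[e_k]{f}$. This $p$ is a Weierstrass polynomial of degree $n/e_k$ with characteristic $(b_0/e_k,\dots,b_k/e_k)$, its coefficients depend polynomially on those of $f$, and its Puiseux coefficients agree with those of $f$ (after the index rescaling) below $b_{k+1}$. Crucially, for $p$ the last $e$-number is $e_k'=1$, so applying Lemma~\ref{lemma1} to $p$ gives exactly the exponent $n_k$ in~\eqref{eq:Ab_k} and makes the Hensel step in part~(ii) go through verbatim.

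In your direct approach the tail factor has $\ini_{(1,b_k)}\widehat f^{(k)}_{\geq k}=(Y^{n_k}-A_{b_k}^{n_k}T^{n_kb_k})^{e_k}$, a perfect $e_k$-th power when $k<m$. This causes two failures. First, evaluating at $Y=0$ yields $A_{b_k}^{e_{k-1}}$ (not $A_{b_k}^{n_k}$) as the monomial; you cannot extract an $e_k$-th root and keep $M$ polynomial in $C$ and the $A_i$ with $i<b_k$, and the ``homogeneity'' remark does not supply such an extraction. Second, the linear factor $G=Y-\hat A_{b_k}T^{b_k}$ and the cofactor $H=\ini_{(1,b_k)}\widehat f^{(k)}_{\geq k}/G$ are \emph{not} coprime (indeed $G\mid H$ whenever $e_k>1$), so Lemma~\ref{RSef} does not apply and the computation of the $\hat A_i$ for part~(ii) breaks down. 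The approximate-root reduction is precisely what removes this multiplicity.

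Separately, your $U_n$-invariance argument is more elaborate than necessary: the paper simply takes $r$ to be any multiple of $n$ exceeding $l$; then the monomial $N^r$ and $A_{b_k}^{n_kr}$ are visibly $U_n$-invariant, hence so is $M^r=N^rA_{b_k}^{n_kr}$.
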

\begin{proof}
For $k=m$ the statement follows directly from Lemma \ref{lemma1}. 
Take an~arbitrary $k\in\{1,\dots,m-1\}$. 
Let $p:=\sqrt[e_{k}]{f}$ be the $e_{k}$th approximate root of $f$. 
Then $p\in\mathcal{F}(b_0',b_1',\dots,b_k')$, where $$
 b_j'=b_j/e_k \textnormal{\ \ \  for \ \ \ } 0\leq j \leq k$$
and the curve  $p(x,y)=0$ has a Puiseux parametrisation $(T^{n/e_k},\sum_{i=1}^{\infty}d_i T^i)$ such that
$d_i=a_{i e_k}$ for $i\in\{1,\dots,b_{k+1}-1\}$.  Note that for the  sequence $e_0'>e_1'>\cdots>e_k'$ assosciated with the characteristic of $p$ we have $e_j':=n_{j+1}\cdots n_k$ for $j=0,\dots,k-1$ and $e_k'=1$. 

Let $I=\{i_1,i_2,\dots\},$ where $i_1<i_2<\cdots.$ If $I'=\{i_1',i_2',\dots\}$  is the corresponding set of indices for $p$ with the property $i_1'<i_2'<\cdots$, then $$\{i_1',\dots,i_s'\}=\tfrac{1}{e_k}\{i_1,\dots,i_s\},$$ where $i_{s+1}=b_{k+1}$. Consider the sequence of variables  $D:=(D_i)_{i\in I'}$ that corresponds to the sequence of coefficients  of the above Puiseux parametrisation of $p$. We can identify  $$A_j=D_{j/e_k}\textnormal{\ \ \    for\ \ \ }j<b_{k+1}$$ and equivalently we can write $$D_j=A_{je_k}\textnormal{\ \ \ for  \ \ \ }j<b_{k+1}'.$$ In particular $$A_{b_j}=D_{b_j'}\textnormal{\ \ \  for\ \ \  }0\leq j\leq k.$$ 

Let $C'$ be the set of variables that corresponds to the coefficients of $p$ and let $F'$ be the set of polynomials defined similarly as $F$ for $f$. Consider substitutions $C=F$ and $C'=F'$.
 Using Lemma \ref{lemma1} (i) to $p$ we have the equality $$A_{b_k}^{n_k}=D_{b_k'}^{n_k}=\tfrac{M}{N}$$
  for some polynomial $M$ depending on $C'$ and $D_{j}=A_{je_k}$, where $je_k<b_k'e_k=b_k$, and for some monomial $N$ in $D_{b_1'}=A_{b_1},\dots,D_{b_{k-1}'}=A_{b_{k-1}}$. By Lemma \ref{lemma1} (ii) we obtain $$A_i=D_{i/e_{k}}=W_iD_{b_{k}'}^{j_i'}=W_iA_{b_k}^{j_i'}$$ for some polynomial $W_i\in\mathbb{C}[C',\tfrac{1}{A_{b_1}},\dots,\tfrac{1}{A_{b_{k-1}}},\tfrac{1}{A_{b_k}^{n_k}}][A]$ depending on variables $D_j=A_{je_k}$ such that $je_k<b_k$ and for the unique $j_i'\in\{0,\dots,n_k-1\}$    satisfying  the condition $b_k j_i' \equiv i/e_{k} \mod e_{k-1}'$. Since $e_{k-1}'=n_k$, the last congruence implies that $e_{k-1}|b_{k}'j_i'e_{k}-i=b_kj_i'-i$. Thus $j_i'=j_i$. Since coefficients of $p$ depends polynomially on $C$, we obtain (\ref{eq:Ab_k}) and (\ref{eq:A_i}).
  
  Take any $l\in\mathbb{Z}_{\geq 0}$. Let $r$ be an arbitrary multiple of $n$ that is greater than $l$. Then $A_{b_k}^{n_kr}=\tfrac{M^r}{N^r}$ and $N^r$ are $U_n$--invariant  and thus $M^r$ is also   $U_n$--invariant.
\end{proof}

\begin{Lemma}\label{l6} 
Let $k$ be an integer such that $1\leq k\leq m$. 
Assume that $Q\in\mathbb{C}[C,A]$ is a $U_n$--invariant polynomial depending on $A_i$ for $i<b_{k+1}$.  
Then there exist $U_n$--invariant polynomials $V,W\in\mathbb{C}[C,A]$
depending on $A_i$ for $i<b_k$ such  that $Q(F,A) V(F,A)=W(F,A)$. 
Moreover if $A_{b_1},\dots, A_{b_m}\neq 0$, then $V(F,A)\neq0$.
\end{Lemma}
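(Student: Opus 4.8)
The idea is to induct on the number of indices $i \in I$ with $b_k \le i < b_{k+1}$ on which $Q$ genuinely depends, reducing $Q$ step by step to a polynomial depending only on the $A_i$ with $i < b_k$. Fix the substitution $C = F$ throughout. Suppose $Q$ depends on $A_{i_0}$ for some $i_0$ with $b_k \le i_0 < b_{k+1}$; among all such indices take $i_0$ maximal. By Corollary~\ref{c:1}(ii), after substituting $C = F$ we have $A_{i_0} = W_{i_0} A_{b_k}^{j_{i_0}}$, where $W_{i_0}$ is a polynomial in $C$, $A_j$ for $j < b_k$, and in the inverses $\tfrac{1}{A_{b_1}},\dots,\tfrac{1}{A_{b_{k-1}}},\tfrac{1}{A_{b_k}^{n_k}}$; note $i_0 \ne b_k$ could occur only when $j_{i_0} = 1$, $W_{i_0} = 1$, but in general we keep the formula as written. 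The first step is to clear denominators: write $A_{b_k}^{n_k} = M/N$ as in Corollary~\ref{c:1}(i), and multiply $Q(F,A)$ by a sufficiently large power of $N \cdot A_{b_1}\cdots A_{b_{k-1}} \cdot M$ so that every occurrence of $A_{i_0}$ in the resulting polynomial has been replaced (using $A_{i_0} = W_{i_0} A_{b_k}^{j_{i_0}}$ and $A_{b_k}^{n_k} = M/N$) by an honest polynomial in $C$ and the $A_j$ with $j < b_k$, together with nonnegative powers of $A_{b_k}$ itself that are strictly less than $n_k$.

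The subtle point — and the place where $U_n$-invariance must be handled carefully — is the following. After the substitution above, $Q$ becomes a polynomial in $C$, the $A_j$ with $j < b_k$, and in $A_{b_k}$; but $A_{b_k}$ with $b_k < b_{k+1}$ is still an index in the "bad" range, so we have not finished. Here we use that $b_k j \equiv i_0 \bmod e_{k-1}$ has a unique solution $j_{i_0} \in \{0,\dots,n_k-1\}$, so each monomial of the transformed polynomial carries a well-defined power of $A_{b_k}$; we regroup by these powers. Now apply the averaging trick over $U_n$ exactly as in the proof of Lemma~\ref{lemma1}: for $\varepsilon \in U_n$ the tuple $(\varepsilon^{i} A_i)_{i\in I}$ gives another Puiseux parametrisation of the same curve, so replacing each $A_i$ by $\varepsilon^i A_i$ fixes $Q(F,A)$ (by $U_n$-invariance of $Q$ and of $F$) but multiplies the coefficient of $A_{b_k}^{j}$ by $\varepsilon^{-(b_k j - \text{(weight of the rest)})}$; summing over $\varepsilon \in U_n$ and applying Lemma~\ref{L:roots} kills all but one power of $A_{b_k}$. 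This leaves $Q(F,A) \cdot (\text{polynomial}) = (\text{polynomial depending only on } C \text{ and } A_j, j<b_k)$, after which we multiply through by a further power of $N$ (and a power of $n$ from the averaging) to clear the one remaining denominator. We then iterate on the next-largest surviving bad index, and after finitely many steps $Q$ is expressed as $Q(F,A) V_0(F,A) = W_0(F,A)$ with $V_0, W_0$ polynomials in $C$ and in $A_j$ for $j < b_k$ only.

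It remains to arrange $U_n$-invariance of $V$ and $W$, and nonvanishing of $V(F,A)$. For invariance: at each stage the multipliers we introduced are powers of $N$, of $M$, of the $A_{b_j}$, and of $n$; by the "moreover" clause of Corollary~\ref{c:1}(i), for any prescribed exponent bound there is a multiple $r$ of $n$ making $M^r$ and $N^r$ both $U_n$-invariant, and $A_{b_j}^n$ is $U_n$-invariant since $b_j \in e_{j-1}\mathbb{N}$ forces... more simply, raising the whole identity to a suitable multiple of $n$ makes every introduced factor $U_n$-invariant; since $Q(F,A)$ is already $U_n$-invariant and nonzero as a polynomial, we may set $V := (\text{that power of the accumulated multiplier})$ and $W := Q(F,A)^{?}V$ — concretely, raise $Q(F,A) V_0(F,A) = W_0(F,A)$ to the power $r$ where $r$ is a multiple of $n$ large enough that $V_0^r$ and $W_0^r$ are $U_n$-invariant (possible again by Corollary~\ref{c:1}(i), since $V_0, W_0$ are built from $C$, the $A_j$ with $j<b_k$, and finitely many $M$'s, $N$'s), and set $V := V_0^r$, $W := W_0^r = (Q(F,A))^r V$. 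Finally, if $A_{b_1},\dots,A_{b_m} \ne 0$ then each $N$ we divided by, being a monomial in $A_{b_1},\dots,A_{b_{k-1}}$, is nonzero, and $A_{b_k}^{n_k} = M/N \ne 0$ forces $M \ne 0$; hence every factor assembled into $V_0$ is nonzero, so $V_0(F,A) \ne 0$ and therefore $V(F,A) = V_0(F,A)^r \ne 0$.

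I expect the main obstacle to be bookkeeping in the averaging step: one must verify that after clearing denominators the transformed polynomial really is a polynomial in $A_{b_k}$ with all exponents constrained mod $e_{k-1}$ (so that the weights appearing in the exponents of $\varepsilon$ are controlled and Lemma~\ref{L:roots} applies cleanly), and one must track the exponent of $A_{b_k}$ in each monomial against the total weight of its other factors to see precisely which residue class survives. This is the same mechanism as in Lemma~\ref{lemma1}, applied now to an arbitrary $U_n$-invariant $Q$ rather than to the specific coefficient extracted from $\widehat f(T,0)$.
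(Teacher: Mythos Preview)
Your overall strategy matches the paper's: use Corollary~\ref{c:1}(ii) to replace each $A_i$ with $b_k\le i<b_{k+1}$ by $W_iA_{b_k}^{j_i}$, use invariance to show only powers $A_{b_k}^{n_k}$ survive, and then clear denominators via Corollary~\ref{c:1}(i). However, two steps in your execution do not work as written.

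First, the final step for arranging $U_n$-invariance is wrong. Raising $Q(F,A)V_0(F,A)=W_0(F,A)$ to the $r$th power yields $Q^rV_0^r=W_0^r$, so with $V:=V_0^r$ and $W:=W_0^r$ you obtain $Q^rV=W$, not $QV=W$; and you cannot absorb the extra factor $Q^{r-1}$ into $V$, since $Q$ depends on $A_i$ with $i\ge b_k$. The paper's fix is different: having reached $QN_1M^l=M_1$ with $N_1$ a monomial in $A_{b_1},\dots,A_{b_{k-1}}$, one \emph{multiplies} both sides by a further monomial $N_2$ and by $M^{r-l}$, choosing $N_2$ and $r$ so that $V:=N_1N_2M^r$ is $U_n$-invariant. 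Then $W:=M_1N_2M^{r-l}=QV$ is automatically $U_n$-invariant because $Q$ is.

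Second, the plan to eliminate the bad variables one at a time and average over all of $U_n$ at each step is problematic: after a single substitution the intermediate polynomial is no longer $U_n$-invariant, and averaging over $U_n$ also moves the variables $A_j$ with $j<b_k$, so your description of which residue class survives does not go through. The paper sidesteps this by substituting \emph{all} bad-range variables at once to form $\widetilde Q(A_{b_k}):=Q(W_iA_{b_k}^{j_i},\dots,W_{i+r}A_{b_k}^{j_{i+r}})$, and then checking invariance only under the subgroup $U_{e_{k-1}}\subset U_n$: for $\varepsilon\in U_{e_{k-1}}$ every index $j<b_k$ lying in $I$ satisfies $e_{k-1}\mid j$, hence $\varepsilon^jA_j=A_j$ and the $W_i$ are fixed, while the congruence $b_kj_i\equiv i\bmod e_{k-1}$ gives $\widetilde Q(\varepsilon^{b_k}A_{b_k})=\widetilde Q(A_{b_k})$. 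Since $\{\varepsilon^{b_k}:\varepsilon\in U_{e_{k-1}}\}=U_{n_k}$ by Lemma~\ref{L:roots}, this forces $\widetilde Q$ to be a Laurent polynomial in $A_{b_k}^{n_k}$ alone, with no further bookkeeping needed.
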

\begin{proof} 
Fix $k$ ($1\leq k\leq m$).  Let $Q\in\mathbb{C}[C,A]$ be a $U_n$--invariant 
polynomial depending on $A_i$ for $i<b_{k+1}$ and write $Q$ as a polynomial in 
$\mathbb{C}[C,A''][A_{i},\dots,A_{i+r}]$, where $A''$ are variables $A_j$ for $j<b_k$ and 
$b_k \leq i < b_{k+1}$. 
Let $\widetilde Q(A_{b_k}):=Q(W_i A_{b_k}^{j_i},\dots,  W_{i+r}A_{b_k}^{j_{i+r}})$.
Then for every $\varepsilon\in U_{e_{k-1}}$ we have 
\begin{align*}
\widetilde Q(\varepsilon^{b_k}A_{b_k})&=
Q(W_i \cdot (\varepsilon^{b_k}A_{b_l})^{j_i},\dots, W_{i+r}\cdot (\varepsilon^{b_k}A_{b_k})^{j_{i+r}})\\ 
 &= Q(\varepsilon^i W_i A_{b_k}^{j_i},\dots, \varepsilon^{i+r}W_{i+r} A_{b_k}^{j_{i+r}})\\
  &=
Q(W_i A_{b_k}^{j_i},\dots, W_{i+r} A_{b_k}^{j_{i+r}}) = \widetilde Q( A_{b_k}).
\end{align*} 

By Lemma~\ref{L:roots} we have $\{\,\varepsilon^{b_k}: \varepsilon\in U_{e_{k-1}}\,\} = U_{n_k}$. 
Hence
\begin{equation}\label{eq:5}
\widetilde Q(\varepsilon A_{b_k})=\widetilde Q( A_{b_k})
\end{equation}
for every $\varepsilon\in U_{n_k}$. 

Let us substitute $C=F$. Using Corollary \ref{c:1} we can write $\widetilde Q(A_{b_k})$ 
as a~Laurent polynomial 
 $$ \widetilde Q(A_{b_k})=\sum Q_i A_{b_k}^i $$
with coefficients $Q_i$ in the ring $\bC[C,A'',\tfrac{1}{A_{b_1}}\dots,\tfrac{1}{A_{b_{k-1}}}].$

By the same argument as in the proof of Lemma~\ref{lemma1} we obtain 
$$ \widetilde Q(A_{b_k})=\sum Q_{n_k i} A_{b_k}^{n_k i} .$$

It follows that $Q$ can be expressed as a polynomial in 
$C,A'',\tfrac{1}{A_{b_1}},\dots,\tfrac{1}{A_{b_{k-1}}}$, $\tfrac{1}{A_{b_k}^{n_k}}$, 
$A_{b_k}^{n_k}$. Using Corollary \ref{c:1} and converting everything to a common denominator, we obtain  a polynomial $M_1$ in variables $C, A''$,  a monomial $N_1$ in variables 
$A_{b_1} ,\dots,A_{b_{k-1}}$  and a non-negative
integer $l$ such that 
$$Q N_1 M^l = M_1.$$
There exist a monomial $N_2$ in variables $A_{b_1} ,\dots, A_{b_{k-1}}$ and $r>l$ 
such that $N_1 N_2$ and $M^r$ are $U_n$--invariant polynomials.  
Hence the polynomials $V= N_1 N_2 M^r$ and $W=M_1N_2 M^{r-l}$ with coefficients in $A'$, $C$
are $U_n$--invariant, $Q V = W$, and $V\neq0$ provided $A_{b_1},\dots, A_{b_m}\neq 0$.

\end{proof}

\begin{Theorem}\label{main}
Let $Q\in\mathbb{C}[A]$ be  $U_n$--invariant.  Then there exist polynomials 
$V,W\in \bC[C]$ such that 
$$ Q=\frac{W(F)}{V(F)}
$$
and $V(F)$ is a monomial in variables $A_{b_1}$, \dots,  $A_{b_m}$.
\end{Theorem}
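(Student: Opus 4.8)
The plan is to prove Theorem~\ref{main} by induction on the characteristic index, peeling off the variables $A_i$ one characteristic level at a time using Lemma~\ref{l6}. Since $Q\in\mathbb{C}[A]$ depends on finitely many variables $A_i$, there is some $k$ with $0\le k\le m$ such that $Q$ depends only on $A_i$ with $i<b_{k+1}$ (taking $b_{m+1}=+\infty$, the case $k=m$ covers an arbitrary $Q\in\mathbb{C}[A]$). The idea is to show, by downward induction on $k$, the following statement: if $Q\in\mathbb{C}[C,A]$ is $U_n$-invariant and depends on $A_i$ only for $i<b_{k+1}$, then there exist $U_n$-invariant polynomials $W_k, V_k\in\mathbb{C}[C,A]$ depending on $A_i$ only for $i<b_1$ (i.e.\ on the variables $A_i$ with $e_0\mid i$, $i\ge b_0$) such that $Q(F,A)V_k(F,A)=W_k(F,A)$, together with some control on the shape of $V_k$ so that after the final step it becomes a monomial in $A_{b_1},\dots,A_{b_m}$.

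The inductive step is essentially Lemma~\ref{l6}: given $Q$ depending on $A_i$ for $i<b_{k+1}$, the lemma produces $U_n$-invariant $V,W\in\mathbb{C}[C,A]$ depending on $A_i$ for $i<b_k$ with $QV=W$ (after substituting $C=F$), and moreover $V(F,A)\ne0$ when $A_{b_1},\dots,A_{b_m}\ne0$. Applying this repeatedly for $k,k-1,\dots,1$ we arrive at a $U_n$-invariant identity $Q\cdot V_{*}=W_{*}$ where $V_{*},W_{*}$ depend only on the variables $A_i$ with $i<b_1$, that is, on $A_i$ with $e_0\mid i$ and $i\ge b_0$. The remaining task is to handle this base case: a $U_n$-invariant polynomial in variables $A_i$ with $e_0=n\mid i$. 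For such $i$ the monomial-by-monomial $U_n$-action is trivial ($\varepsilon^i=1$ for $\varepsilon\in U_n$ since $n\mid i$), so $U_n$-invariance imposes no constraint, but the point is different: I must express these $A_i$ back in terms of the $C_{ij}$, i.e.\ in terms of $F$. This is where Lemma~\ref{lemma1} / Corollary~\ref{c:1} enter — or more directly, one observes that when $n\mid i$ the corresponding coefficient of $\alpha$ enters $f^*$ linearly (this is the reason the characteristic stops at level $b_m$ with $e_m=1$). Concretely, for $i$ with $n\mid i$, the elementary symmetric polynomial expansion gives $c_{i/n,\,n-1}=-n\,a_i+(\text{polynomial in }a_j,\ j<i)$, so each such $A_i$ can be solved for polynomially in terms of $F$ and the previously treated $A_j$'s; substituting recursively shows every $U_n$-invariant polynomial in the $A_i$ with $n\mid i$ already lies in $\mathbb{C}[F]$.

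Putting these together: after the inductive descent we have $Q\cdot V_{*}(F)=W_{*}(F)$ with $V_{*},W_{*}\in\mathbb{C}[C]$ and $V_{*}(F)$ a nonzero monomial in $A_{b_1},\dots,A_{b_m}$ (this is exactly the form of the denominator produced by Lemma~\ref{l6}, since at each stage the new factor of $V$ is $N_1N_2M^r$ and, after all substitutions reach level $1$, the surviving denominator is a monomial in $A_{b_1},\dots,A_{b_m}$). Since $V_{*}(F)\ne0$ in the field of fractions, we may divide and obtain $Q=W_{*}(F)/V_{*}(F)$ with $V_{*}(F)$ a monomial in $A_{b_1},\dots,A_{b_m}$, which is the assertion. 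The main obstacle I anticipate is bookkeeping the denominators through the iterated application of Lemma~\ref{l6}: at each step the denominator picks up both a monomial factor $N$ in the $A_{b_j}$'s and a power of the polynomial $M$ from Corollary~\ref{c:1}, and one must use the final clause of Corollary~\ref{c:1}(i) (that a suitable power $M^r$ is $U_n$-invariant) to keep everything $U_n$-invariant, and then check that the accumulated $U_n$-invariant denominator, once all non-$b_j$ variables have been eliminated via the base case, collapses to a genuine monomial in $A_{b_1},\dots,A_{b_m}$ rather than a more complicated polynomial. Verifying this collapse — i.e.\ that no stray $U_n$-invariant factor involving other $A_i$ survives in the denominator — is the delicate point, but it follows because those other variables have been rewritten as $W_iA_{b_k}^{j_i}$ with $W_i$ itself absorbed into the numerator.
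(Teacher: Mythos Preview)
Your overall strategy matches the paper's: induction on the characteristic level via Lemma~\ref{l6}, with base case for the $A_i$ with $i<b_1$. (A minor sharpening: the base-case relation is exact. Since $n\mid i$ for every $i\in I$ with $i<b_1$, equation~(\ref{eq:roots}) gives $\sum_{\varepsilon\in U_n}\alpha(\varepsilon T)=n\sum_{n\mid j}a_jT^j$ up to order $b_1$, whence $A_i=-\tfrac{1}{n}C_{i/n,\,n-1}$ on the nose; your triangular recursion is correct but unnecessary.)

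The genuine gap is the claim that $V(F)$ is a monomial in $A_{b_1},\dots,A_{b_m}$. You correctly flag this as the delicate point, but the proposed justification---that the non-$A_{b_j}$ variables ``have been rewritten as $W_iA_{b_k}^{j_i}$ with $W_i$ absorbed into the numerator''---does not explain why the \emph{accumulated} denominator, which is a product of factors $N_1N_2M^r$ coming from successive levels (each $M$ a genuine polynomial in $C$ and lower $A_i$, not a priori a monomial), collapses to a monomial once everything is pushed down to level $1$ and the $C$'s are replaced by $F$'s. You never set up an inductive invariant that would force this. The paper avoids the bookkeeping altogether: throughout the induction it records only the much weaker property that the denominator is nonzero whenever $A_{b_1},\dots,A_{b_m}\neq 0$ (which is immediate from the last sentence of Lemma~\ref{l6}), and then, after the induction produces $Q=W(F)/V(F)$ with $V,W\in\mathbb{C}[C]$, invokes Hilbert's Nullstellensatz. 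Since $V(F)\in\mathbb{C}[A]$ vanishes only where $A_{b_1}\cdots A_{b_m}$ does, the Nullstellensatz yields $V(F)\mid(A_{b_1}\cdots A_{b_m})^l$ for some $l$, hence $V(F)$ is a monomial in $A_{b_1},\dots,A_{b_m}$. This single stroke replaces all of the denominator-tracking you anticipated.
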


\begin{proof}
Consider the substitution $C=F$. We will prove by induction on $k\in\{1,\dots, m+1\}$ the following statement:
\begin{itemize}
\item[$(T_k)$] for every $U_n$--invariant polynomial $Q\in\mathbb{C}[C,A]$ that depends on $A_i$ for $i<b_k$
there exist polynomials $M,N\in \bC[C]$ such that $Q=N/M$ and $M\neq 0$ provided $A_{b_1},\dots, A_{b_m}\neq 0$.
\end{itemize}

\medskip
\textit{Proof of $(T_1)$}. 
For indices $i\in I$ such that $i<b_1$ we have $i \equiv 0 \mod n$. Hence the series 
$\sum_{\varepsilon\in U_n} \alpha(\varepsilon T)$ and $n \alpha$ are equal up to order $b_1$.  
By~(\ref{equation2}) the~first of these series is equal to minus coefficient of the polynomial $f(T^n,Y)$ at $Y^{n-1}$. Thus for $i<b_1$ we have 
 $A_i=-\tfrac{1}{n}C_{i/n,n-1}$. Hence $Q$ can be expressed as a polynomial in $C$. 
 
\medskip
\textit{Proof of} $(T_k)\Rightarrow (T_{k+1})$. 
Let $Q$ be a $U_n$--invariant complex polynomial that depends on $A_i$ for $i<b_{k+1}$. 
By Lemma~\ref{l6} there exist $U_n$--invariant polynomials $V,W\in\mathbb{C}[C,A]$ 
depending on $A_i$ for $i<b_k$ such  that $Q V= W$. 
Moreover if $A_{b_1},\dots, A_{b_m}\neq 0$, then $V\neq0$. 

By inductive assumption there are polynomials $M_1,N_1,M_2,N_2\in\bC[C]$ such that 
$V =N_1/M_1$, $W = N_2/M_2$ and  $M_1,M_2\neq 0$ provided $A_{b_1},\dots, A_{b_m}\neq 0$.
Then $Q=\frac{W}{V} = \frac{N_2/M_2}{N_1/M_1}=\frac{N_2 M_1}{N_1 M_2}$. Since $N_1=V M_1$, we get $N_1 M_2 \neq 0$ provided $A_{b_1},\dots, A_{b_m}\neq 0$.

\medskip
We proved by induction the first part of the theorem. The denominator $V$ has the property:
if $A_{b_1},\dots, A_{b_m}\neq 0$, then $V(F)\neq 0$.  Hence by Hilbert Nullstellensatz 
$V(F)$ divides $(A_{b_1}\cdots A_{b_m})^l$ for some positive integer~$l$.
\end{proof}

\section{Irreducible power series}

Let us consider the following variation of the Weierstrass theorem.

\begin{Theorem}[Weierstrass]\label{wei}
Assume that $f =
\sum_{i,j}
c_{ij}X^iY^j \in \mathbb{C}[[X ,Y ]]$
and there exists $m > 0$ such that $c_{0j
} = 0$ for $j < m$ and $c_{0m}=1$. Then
there exist unique $u_f$, $w_f \in\mathbb{C}[[X,Y ]]$ such that $f = u_fw_f $, $u_f (0,0)=1 \ne 0$ and $w_f$
is a~Weierstrass polynomial with respect to $Y$. Moreover, the coefficients of  $w_f$ and $u_f$ depends polynomially on the coefficients of $f$.
\end{Theorem}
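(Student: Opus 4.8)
The plan is to reduce the Weierstrass Preparation Theorem with the extra ``polynomial dependence'' claim to the standard formal version by tracking coefficients through the usual recursive construction. First I would normalise: write $f=\sum_{i,j}c_{ij}X^iY^j$ with $c_{0j}=0$ for $j<m$ and $c_{0m}=1$, and split off the part of $f$ that lives on the axis $X=0$. Set $w_f=Y^m+\sum_{k=1}^m a_k(X)Y^{m-k}$ with $a_k(X)\in X\bC[[X]]$ and $u_f=\sum_{l\ge 0}b_l(X)Y^l$ with $b_0(0)=1$ invertible. Expanding $f=u_fw_f$ and comparing coefficients of $X^iY^j$ gives, for each fixed total $X$-degree $i$, a triangular linear system over the previously determined coefficients; the leading term is governed by $b_0(0)=c_{0m}=1$, so at each stage one inverts a matrix whose determinant is a power of $b_0(0)=1$ (more precisely, one solves for the next batch of coefficients of $a_k$ and $b_l$ using only multiplication, addition, and division by $b_0(0)$, which is $1$).

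The key steps, in order, are: (1) set up the coefficient comparison $c_{ij}=\sum b_l(X)$-coefficients times $w_f$-coefficients, organised by increasing power of $X$; (2) observe that at $X$-order $0$ one gets $c_{0j}=0$ for $j<m$, $c_{0m}=1$, forcing $b_0(0)=1$ and $a_k(0)=0$; (3) show by induction on the $X$-order $i$ that the coefficients of $a_k$ and $b_l$ at $X^i$ are \emph{polynomials} (with integer coefficients) in the $c_{i'j}$ with $i'\le i$ and in $b_0(0)^{-1}=1$, hence plain polynomials in the $c_{ij}$; (4) conclude existence, uniqueness (uniqueness is forced because the recursion determines every coefficient with no freedom), and the polynomial-dependence statement simultaneously. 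The general principle behind step (3) is exactly the one used already in Section~\ref{Section:Hensel}: solving $f=gh$ by a weight/order recursion where each step only requires dividing by an invertible quantity, here the constant $b_0(0)=1$, so no denominators ever appear.

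The main obstacle is bookkeeping rather than mathematical depth: one must be careful about which coefficients of $a_k$ and $b_l$ enter the equation that determines a given coefficient, so that the induction on $X$-order is genuinely well-founded and the dependence is only on $c_{i'j}$ with $i'\le i$ (and not on strictly higher order data). A clean way to handle this is to introduce the $X$-adic valuation and note that in the product $u_fw_f$ the coefficient of $X^iY^j$ equals $c_{ij}$, that $w_f$ contributes its top term $Y^m$ with coefficient $1$, and that the unknown of highest relevance at order $i$ appears linearly with coefficient $b_0(0)=1$; everything else has already been computed at orders $<i$. I would also remark that this is the form of the theorem needed in Section~\ref{Section:Hensel} and in the next section, so it suffices to record it with this emphasis and a short proof, citing \cite{hefez} for the classical existence/uniqueness and supplying only the polynomial-dependence bookkeeping in detail.

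\begin{proof}
Write $w_f = Y^m + a_1(X)Y^{m-1} + \cdots + a_m(X)$ with $a_k(X)\in X\bC[[X]]$ and
$u_f = \sum_{l\ge 0} b_l(X) Y^l$ with $b_0(0)\ne 0$. Comparing in $f = u_f w_f$ the coefficients
of $X^iY^j$, and ordering the resulting equations by increasing $i$, one sees that the equation
attached to $X^iY^j$ expresses $c_{ij}$ as a $\bZ$-polynomial in $b_0(0)$, in the coefficients of
$a_1,\dots,a_m,b_0,b_1,\dots$ at $X$-order $<i$, and \emph{linearly} in the coefficients of
$a_1,\dots,a_m$ and $b_l$ at $X$-order exactly $i$, with the latter appearing multiplied by
$b_0(0)$ or by coefficients of lower order. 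At $X$-order $0$ the hypotheses $c_{0j}=0$ for $j<m$ and
$c_{0m}=1$ give $a_k(0)=0$ for all $k$ and $b_0(0)=1$. Proceeding by induction on the $X$-order $i$:
assuming all coefficients of the $a_k$ and $b_l$ of order $<i$ are already determined as polynomials
in the $c_{i'j}$ with $i'\le i-1$, the order-$i$ equations form a linear system in the order-$i$
coefficients whose matrix is triangular with diagonal entries equal to $b_0(0)=1$; solving it
expresses those coefficients as polynomials (over $\bZ$) in the $c_{i'j}$ with $i'\le i$. This proves
existence of $u_f,w_f$ and simultaneously that their coefficients depend polynomially on the
coefficients of $f$. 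Uniqueness follows because the recursion leaves no freedom: any $u_f,w_f$ of
the prescribed shape with $f=u_fw_f$ must satisfy the same equations and hence have the same
coefficients. Finally $u_f(0,0)=b_0(0)=1\ne 0$.
\end{proof}
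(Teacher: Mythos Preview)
Your argument is correct: organising the equation $f=u_fw_f$ by $X$-order and, at each order $I$, first solving for $a_{m,I},a_{m-1,I},\dots,a_{1,I}$ from the equations with $Y$-exponent $J=0,1,\dots,m-1$ (these are triangular with diagonal entry $b_0(0)=1$), and then reading off each $b_{l,I}$ from the equation with $J=l+m$, does give a well-founded recursion producing every coefficient of $u_f$ and $w_f$ as a polynomial in the $c_{ij}$. Uniqueness follows as you say. The one thing you leave implicit is the precise ordering of the unknowns that makes the system triangular; your proof would read more convincingly if you stated it, but the content is there.

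The paper proceeds differently: instead of comparing coefficients in the product $u_fw_f$, it invokes the recursive construction in Hefez's proof of the Weierstrass Division Theorem (\cite[Theorem~2.3]{hefez}), specialised so that one divides by a monic polynomial $P=Y^m+(\text{lower terms})$; because $P$ is monic, every division step is polynomial in the inputs, which yields $Q=u_f^{-1}$ and $R=Y^m-w_f$ with polynomial coefficients, and then $u_f=Q^{-1}$ is handled via the geometric-series formula using $q_0=1$. The paper's route is shorter by citation and makes the polynomial dependence transparent through the ``division by monic'' mechanism; your route is self-contained, avoids the detour through $u_f^{-1}$, and makes explicit exactly which $c_{i'j}$ (namely $i'\le i$) enter the coefficients of $u_f$ and $w_f$ at $X$-order $i$.
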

\begin{proof}
It is enough to apply the proof of \cite[Theorem 2.3]{hefez} with $r=2$, $X_1=X,X_2=Y$, $G=Y^m$, $Q=u_f^{-1}$, $F=f$ and $R=Y^m-w_f$.
Then $R_{-1}=0$, $c=a_m(0)=1$, $H=H_m=G=Y^m$, $H_{m+i}=0$ for every $i\in\mathbb{Z}_{>0}$, $q_0=1$  and $P=Y^m+(\textnormal{terms in $Y$ of degree }<m)$. Since   $P\in\mathbb{C}[X][Y]$ is monic and on every step of construction we divide polynomial $-q_0F_{m+k}-q_1F_{m+k-1}-\cdots-q_{k-1} F_{m+1}$ by $P$,  the constructed polynomials $q_i$, $R_i$ depends polynomially on the coefficients of~$f$. This implies that coefficents of  $w_f=Y^m-R=Y^m-(R_0+R_1+\cdots)$ and $Q=q_0+q_1+\cdots$ depend polynomially on the coefficients of $f$. Using the formulas from the proof of \cite[Proposition 1.1]{hefez} and the fact that $q_0=1$, we obtain that the coefficients of  $u_f=Q^{-1}$ also depend polynomially on the coefficients of $f$.
\end{proof}

 

 Using Theorem~\ref{main} and   Theorem \ref{wei} we directly obtain the~following fact.
 
\begin{Corollary}\label{c:series}
Let $f\in\widehat{\mathcal{F}}(b_0,b_1,\dots,b_m)$. Assume that $Q$ is a $U_n$--invariant polynomial of the coefficients of  a~Puiseux parametrisation $(T^n,\sum_{i=0}^{\infty}a_iT^i)$ of the curve $f(x,y)=0$. Then $Q$ is a rational function of the coefficients  of $f$ such that the denominator is a monomial in $a_{b_1},\dots,a_{b_m}$.
\end{Corollary}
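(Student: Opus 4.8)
The plan is to reduce Corollary~\ref{c:series} to Theorem~\ref{main} by passing from the power series $f$ to its associated Weierstrass polynomial $w_f$, and then composing the two dictionaries: the one from $w_f$ to its Puiseux parametrisation, and the one given by Theorem~\ref{wei} from $f$ to $w_f$. Concretely, since $f\in\widehat{\mathcal{F}}(b_0,b_1,\dots,b_m)$, after dividing by the constant $c_{0n}$ we are in the situation of Theorem~\ref{wei}, which provides the Weierstrass polynomial $w_f=Y^n+c_1(X)Y^{n-1}+\dots+c_n(X)\in\mathcal{F}(b_0,b_1,\dots,b_m)$ whose coefficients are polynomials in the coefficients of $f$. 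Write those coefficients of $w_f$ as $c_{ij}$, so each $c_{ij}$ is a polynomial $P_{ij}$ in the coefficients of $f$.

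Next I would observe that $f$ and $w_f$ define the same germ, hence share the same Puiseux parametrisation $(T^n,\sum a_iT^i)$. By the construction in Section~\ref{sect:coeff} (the paragraph introducing $F_{ij}$), there are $U_n$--invariant polynomials $F_{ij}\in\mathbb{C}[A]$ with $c_{ij}=F_{ij}(a_{i_1},a_{i_2},\dots)$. Now apply Theorem~\ref{main} to the given $U_n$--invariant polynomial $Q\in\mathbb{C}[A]$: there exist $V,W\in\mathbb{C}[C]$ with $Q=W(F)/V(F)$ and $V(F)$ a monomial in $a_{b_1},\dots,a_{b_m}$. Substituting $c_{ij}=P_{ij}(\textrm{coefficients of }f)$ into $W(C)$ and $V(C)$ turns $W(F)$ and $V(F)$ into polynomials in the coefficients of $f$; thus $Q$ equals the ratio of two such polynomials. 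The denominator is still exactly $V(F)$, which by Theorem~\ref{main} is the monomial in $a_{b_1},\dots,a_{b_m}$, and since each $a_{b_k}$ is itself (by $(T_k)$, or directly by Corollary~\ref{c:1}) a rational expression in the $c_{ij}$ and hence in the coefficients of $f$, one can either leave the denominator in the form $a_{b_1}^{l_1}\cdots a_{b_m}^{l_m}$ or re-express it; the cleanest statement keeps it as a monomial in $a_{b_1},\dots,a_{b_m}$, exactly as claimed.

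The one point that needs a little care — and which I expect to be the main (minor) obstacle — is bookkeeping about which ambient ring everything lives in: Theorem~\ref{main} gives $V,W$ as honest polynomials in the indeterminates $C=(C_{ij})$, and one must be sure that after the two successive substitutions ($C\mapsto F(A)$ from the parametrisation side, and $C_{ij}\mapsto P_{ij}$ from the Weierstrass side) no spurious cancellation or division is introduced, so that the final expression for $Q$ is genuinely (polynomial)/(monomial in the $a_{b_k}$). This is immediate because both substitutions are ring homomorphisms into $\mathbb{C}[A]$ (resp.\ into the polynomial ring in the coefficients of $f$) and $V(F)$ was already established to be a nonzero monomial in the $a_{b_k}$ by Theorem~\ref{main}. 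Finally I would note that the coefficient normalisation $c_{0n}=1$ costs nothing: replacing $f$ by $f/c_{0n}$ does not change the curve, hence does not change $Q$, and only rescales the polynomial coefficients of $w_f$ by powers of $c_{0n}$, which is harmless for a statement about rational dependence. Assembling these observations gives the corollary directly, with the proof being essentially the one-line composition already indicated in the excerpt.
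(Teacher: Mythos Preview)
Your proposal is correct and follows exactly the route the paper indicates: normalize $f$ by $c_{0n}$, invoke Theorem~\ref{wei} to express the coefficients of the Weierstrass polynomial $w_f$ polynomially in those of $f/c_{0n}$, and then apply Theorem~\ref{main}. The paper's own proof is the single sentence ``Using Theorem~\ref{main} and Theorem~\ref{wei} we directly obtain the following fact,'' so your write-up is simply a fleshed-out version of that composition.
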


\section{Non-degeneracy conditions}

Consider 
$f=\sum_{ij} c_{ij} X^iY^j$ which belongs to the equisingularity class 
$\widehat{\mathcal{F}}(n,b_1,\dots,b_m)$ and a Puiseux paramerisation 
$(T^n,\sum_{i\in I} a_i T^i)$ of the curve $f(x,y)=0$.

\begin{Theorem} \label{t3}
Let $\{i_1,\dots, i_s\}$ be a finite subset of $I$. 
Then for every polynomial $Q\in\bC[A_{i_1},\dots, A_{i_s}]$ such that 
$Q(A_{i_1},\dots, A_{i_s})=0 \iff 
Q(\varepsilon^{i_1}A_{i_1},\dots, \varepsilon^{i_s}A_{i_s})=0$
for $\varepsilon\in U_n$
there exits a finite subset $\{\gamma_i,\dots, \gamma_t\}$ of $\bN^2$ 
and a polynomial $W\in\bC[C_{\gamma_1},\dots,C_{\gamma_t}]$
such that 
$Q(a_{i_1},\dots,a_{i_s})=0 \iff W(c_{\gamma_1},\dots,c_{\gamma_t})=0$.
\end{Theorem}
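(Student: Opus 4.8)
The plan is to symmetrise $Q$ into an honest $U_n$--invariant polynomial with the same vanishing locus, and then feed it into Corollary~\ref{c:series}.

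First I would set
$$\widetilde Q := \prod_{\varepsilon\in U_n} Q(\varepsilon^{i_1}A_{i_1},\dots,\varepsilon^{i_s}A_{i_s}).$$
This is a polynomial in $A_{i_1},\dots,A_{i_s}$ (a finite product of rescalings of $Q$), hence an element of $\bC[A]$, and since for any fixed $\varepsilon_0\in U_n$ the substitution $A_{i_k}\mapsto \varepsilon_0^{i_k}A_{i_k}$ merely reindexes the product over the group $U_n$, the polynomial $\widetilde Q$ is $U_n$--invariant in the sense of Section~\ref{sect:coeff}. Next I would observe that for any complex tuple $(a_{i_1},\dots,a_{i_s})$ one has $\widetilde Q(a_{i_1},\dots,a_{i_s})=0$ if and only if $Q(\varepsilon^{i_1}a_{i_1},\dots,\varepsilon^{i_s}a_{i_s})=0$ for some $\varepsilon\in U_n$, and by the hypothesis imposed on $Q$ this happens if and only if $Q(a_{i_1},\dots,a_{i_s})=0$. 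In particular this equivalence holds at the coefficients $a_{i_k}$ of the given Puiseux parametrisation.

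Then I would apply Corollary~\ref{c:series} to the $U_n$--invariant polynomial $\widetilde Q$ and the series $f$: it yields a polynomial $W$ in finitely many of the variables $C_\gamma$, say $W\in\bC[C_{\gamma_1},\dots,C_{\gamma_t}]$, together with a monomial $N$ in $A_{b_1},\dots,A_{b_m}$, such that
$$\widetilde Q(a_{i_1},\dots,a_{i_s})=\frac{W(c_{\gamma_1},\dots,c_{\gamma_t})}{N(a_{b_1},\dots,a_{b_m})}.$$
Since $b_1,\dots,b_m\in{\rm supp}(\alpha)$, the coefficients $a_{b_1},\dots,a_{b_m}$ are nonzero, so $N(a_{b_1},\dots,a_{b_m})\ne 0$; hence $\widetilde Q(a_{i_1},\dots,a_{i_s})=0$ if and only if $W(c_{\gamma_1},\dots,c_{\gamma_t})=0$. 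Combining this with the previous paragraph gives $Q(a_{i_1},\dots,a_{i_s})=0\iff W(c_{\gamma_1},\dots,c_{\gamma_t})=0$, which is exactly the assertion.

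The only point that needs genuine care — and thus the main (if modest) obstacle — is the first step: turning the zero-set invariance hypothesis on $Q$ into an honestly invariant polynomial, for which the group-symmetrised product $\widetilde Q$ is the natural device. Everything else is bookkeeping, relying on the fact, already built into the setup, that the denominator produced by Corollary~\ref{c:series} is supported on the $a_{b_j}$, which are units for any parametrisation of a curve in the class $\widehat{\mathcal F}(n,b_1,\dots,b_m)$.
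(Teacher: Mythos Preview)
Your proof is correct and follows essentially the same route as the paper: both symmetrise $Q$ via the product $\prod_{\varepsilon\in U_n}Q(\varepsilon^{i_1}A_{i_1},\dots,\varepsilon^{i_s}A_{i_s})$ to obtain a $U_n$--invariant polynomial with the same zero locus, and then invoke Corollary~\ref{c:series} and the nonvanishing of the denominator. The only cosmetic difference is that the paper makes the normalisation $f'=(1/c_{0n})f$ explicit (so that Theorem~\ref{wei}, which needs $c_{0m}=1$, applies verbatim) and then clears the resulting powers of $c_{0n}$ at the end, whereas you absorb this step into the black-box use of Corollary~\ref{c:series}; since $c_{0n}\neq0$ this does not affect the vanishing equivalence and your shortcut is legitimate.
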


\begin{proof}
Replacing $Q$ by the product 
$\prod_{\epsilon\in U_n} 
Q(\varepsilon^{i_1}A_{i_1},\dots, \varepsilon^{i_s}A_{i_s})$
we may assume without loss of generality that $Q$ is $U_n$--invariant. 
Let $f'=(1/c_{0n})f$. The power series $f'$ satisfies assumptions 
of Corollary~\ref{c:series}. Hence there exists a finite subset 
$\{\gamma_i,\dots, \gamma_t\}$ of $\bN^2$ and polynomials 
$W,V\in\bC[C_{\gamma_1},\dots,C_{\gamma_t}]$ such that 
$Q(a_{i_1},\dots,a_{i_s})=
W(c_{\gamma_1}/c_{0n},\dots,c_{\gamma_t}/c_{0n})/
V(c_{\gamma_1}/c_{0n},\dots,c_{\gamma_t}/c_{0n})$. Moreover 
$V(c_{\gamma_1}/c_{0n},\dots,c_{\gamma_t}/c_{0n})\neq0$. 
Since for some positive integer $k$, the product $c_{0n}^k\cdot W(c_{\gamma_1}/c_{0n},\dots,c_{\gamma_t}/c_{0n})$ is a polynomial 
in $c_{0n},c_{\gamma_1},\dots,c_{\gamma_t}$, we get the theorem. 
\end{proof}

For  a polynomial $Q$ from the above theorem, the condition $Q(a_{i_1},\dots,a_{i_s})\ne 0$ is called a {\it non-degeneracy condition} for the Pusieux parametrisation of $f$. Theorem \ref{t3} shows that such a condition can by expressed by a polynomial depending on the coefficients of the power series $f$.

\medskip
\noindent
{\small   Beata Gryszka\\
Institute of Mathematics\\
Pedagogical University of Cracow\\
Podchor\c{a}\.{z}ych 2\\
PL-30-084 Cracow, Poland\\
e-mail: bhejmej1f@gmail.com}

\medskip
\noindent
{\small   Janusz Gwo\'zdziewicz\\
Institute of Mathematics\\
Pedagogical University of Cracow\\
Podchor\c{a}\.{z}ych 2\\
PL-30-084 Cracow, Poland\\
e-mail: janusz.gwozdziewicz@up.krakow.pl}

\end{document}